\numberwithin{equation}{section}
\newtheorem{theorem}{Theorem}[section]
\newtheorem{lemma}[theorem]{Lemma}
\newtheorem{corollary}[theorem]{Corollary}
\newtheorem{remark}[theorem]{Remark}
\newtheorem{definition}[theorem]{Definition}
\newtheorem{proposition}[theorem]{Proposition}
\newtheorem{example}[theorem]{Example}
\newcommand{\dd}{\,\mathrm{d}}
\renewcommand{\d}{\mathrm{d}}
\renewcommand{\epsilon}{\varepsilon}
\renewcommand{\phi}{\varphi}
\newcommand{\R}{\mathbb{R}}
\newcommand{\N}{\mathbb{N}}
\newcommand{\E}{\mathbb{E}}
\newcommand{\X}{\mathbb{X}}
\renewcommand{\P}{\mathbb{P}}
\newcommand{\bX}{\mathbf{X}}
\newcommand{\bY}{\mathbf{Y}}
\newcommand{\cA}{\mathcal{A}}
\newcommand{\cB}{\mathcal{B}}
\newcommand{\cF}{\mathcal{F}}
\newcommand{\cL}{\mathcal{L}}
\newcommand{\hX}{\widehat{X}}
\newcommand{\hbX}{\widehat{\mathbf{X}}}
\newcommand{\hbY}{\widehat{\mathbf{Y}}}
\newcommand{\hbW}{\widehat{\mathbf{W}}}
\newcommand{\bW}{\mathbf{W}}
\newcommand{\bbX}{\mathbb{X}}
\newcommand{\hbbX}{\widehat{\mathbb{X}}}
\newcommand{\hbbY}{\widehat{\mathbb{Y}}}
\newcommand{\hbbW}{\widehat{\mathbb{W}}}
\newcommand{\ver}[1]{{\vert\kern-0.25ex\vert\kern-0.25ex\vert #1 \vert\kern-0.25ex\vert\kern-0.25ex\vert}}
\DeclareMathOperator{\spn}{span}
\title[Global universal approximation with Brownian signatures]{Global universal approximation with Brownian signatures}
\author[Ceylan]{Mihriban Ceylan}
\address{Mihriban Ceylan, University of Mannheim, Germany}
\email{mihriban.ceylan@uni-mannheim.de}
\author[Pr{\"o}mel]{David~J. Pr{\"o}mel}
\address{David~J. Pr{\"o}mel, University of Mannheim, Germany}
\email{proemel@uni-mannheim.de}
\date{\today}
\begin{document}

\begin{abstract}
  We establish $L^p$-universal approximation theorems for general path-dependent and non-anticipative functionals on suitable rough path spaces, showing that linear functionals acting on signatures of time-extended rough paths are dense with respect to the $L^p$-distance. To that end, we derive global universal approximation theorems for weighted rough path spaces. We demonstrate that these $L^p$-universal approximation theorems apply to Gaussian processes, in particular, to fractional Brownian motion. As a consequence, linear functionals on the signature of the time-extended Brownian motion can approximate any $p$-integrable stochastic process adapted to the Brownian filtration, including solutions to stochastic differential equations.
\end{abstract}

\maketitle

\noindent \textbf{Key words:} Brownian motion, Gaussian process; non-anticipative functional; rough path; signature; stochastic differential equation; universal approximation theorem; weighted space.

\noindent \textbf{MSC 2010 Classification:} Primary: 60L10; Secondary: 60H10; 60J65; 91G99.


\section{Introduction}

The efficient approximation of functionals on path spaces is a key challenge in numerous areas, including machine learning, mathematical finance, and data-driven modeling of random dynamical systems. In recent years, so-called signature methods have emerged as a powerful framework for representing and approximating path-dependent functionals; see, for instance, \cite{McLeod2025,Bayer2025a}. The concept of signatures was introduced by K.-T. Chen \cite{Chen1954} in the 1950s and has since been extensively studied, most notably in the context of rough path theory \cite{Lyons2007}. Roughly speaking, the signature of a continuous path $X \colon [0,T] \to \mathbb{R}^d$ is the collection of its iterated integrals, which is known to faithfully represent the main characteristics of the path, see \cite{Hambly2010,Boedihardjo2016}.

At the heart of signature methods lie universal approximation theorems, which assert that continuous functionals on suitable path spaces can be approximated arbitrarily well on compact sets by linear functionals acting on signatures; see, for example, \cite{Levin2013,Kiraly2019,Lyons2020}. Owing to these approximation properties and their rich algebraic structure, signatures are often viewed as natural analogues of polynomials on path spaces. This viewpoint has led to a wide range of applications across disciplines. In machine learning and data science, signature methods have been successfully employed for tasks such as image and texture classification \cite{Graham2013}, the generation of synthetic data \cite{Kidger2019}, and topological data analysis \cite{Chevyrev2020}. In mathematical finance, signature methods have found numerous applications, including the pricing of path-dependent European and American options \cite{Lyons2019,Lyons2020,Bayraktar2024,Bayer2025,Bayer2025b}, model calibration \cite{Cuchiero2023,Cuchiero2025}, optimal execution \cite{Kalsi2020}, portfolio optimization \cite{Cuchiero2024b}, and stochastic optimal control \cite{Bank2025}.

While these signature-based universal approximation theorems are of considerable theoretical and practical interest, they are typically restricted to approximations on compact sets and to general path-dependent functionals. These limitations significantly reduce their applicability, in particular in mathematical finance and in the modeling of random dynamical systems. This issue is already apparent from the well-known fact that the sample paths of many fundamental stochastic processes used in financial modelling, such as Brownian motion, do not belong to any fixed compact subset of a path space with positive probability. Moreover, in decision-making problems under uncertainty --- such as optimal execution and portfolio selection --- relevant functionals are often path-dependent but necessarily non-anticipative, since decisions can only depend on the current and past of the underlying dynamics. These considerations have motivated the development of global universal approximation theorems for both general and non-anticipative functionals, formulated either in weighted function spaces or in $L^p$-spaces.

As a first contribution, we establish $L^p$-universal approximation theorems (Theorems~\ref{thm:Lpmain} and \ref{thm:Lpmain 2}) for both general path-dependent and non-anticipative functionals on suitable rough path spaces, formulated in terms of the classical signature. More precisely, these results show that linear functionals acting on the signatures of time-extended rough paths are dense with respect to the standard $L^p$-metric, provided the underlying finite measure satisfies an exponential moment condition. To prove these approximation results, we derive global universal approximation theorems (Propositions~\ref{prop: weighted UAT 1} and~\ref{prop: weighted UAT 2}) on suitably weighted spaces of (stopped) rough paths, relying on a weighted version of the Stone--Weierstrass theorem established in \cite{Cuchiero2024}. The concept of stopped rough paths used throughout follows the common rough path framework recently used in, e.g., \cite{Kalsi2020,Bayer2025,Cuchiero2025}, and can be considered as the natural analogue of stopped paths appearing in the context of functional It{\^o} calculus; see \cite{Cont2013,Dupire2019}.

The present work is related to recent advances on global universal approximation results for signatures. In contrast to the classical signature employed in the $L^p$-universal approximation theorems established in this paper, the results in \cite{Schell2023} and \cite{Bayer2025} are derived using so-called robust signatures, which were introduced in \cite{Chevyrev2022} as a normalized variant of the classical signature. Recall that the classical signature comes with numerical advantages like analytic formulas for expected signatures are available, whereas such tractability may be lost when working with the robust signature. Moreover, the approaches developed in \cite{Schell2023} and \cite{Bayer2025} differ substantially from the one pursued here; for a more detailed comparison, we refer to Remark~\ref{rem: UAT for robust signatures}. With regard to universal approximation theorems for weighted spaces, our analysis builds on a modification of the results in \cite{Cuchiero2024}, which we extend here to the setting of stopped rough paths. In contrast to \cite{Cuchiero2024}, where weakly geometric $\alpha$-H{\"o}lder rough paths are considered, we work with geometric $\alpha$-H{\"o}lder rough paths, which form a Polish space and are therefore more suitable for measure-theoretic arguments. A related weighted-space approximation result is obtained in \cite{Cuchiero2024b} for (Stratonovich-enhanced) stopped continuous semimartingales.

As a second contribution, we establish that the $L^p$-universal approximation theorems apply to suitable time-extended Gaussian processes, including (fractional) Brownian motion and their affine transformations. Consequently, linear functionals acting on the signatures of time-extended Brownian motion are shown to approximate any $p$-integrable stochastic process adapted to the Brownian filtration, including solutions to stochastic differential equations. The key technical ingredient is the verification of the required exponential moment condition under the corresponding Gaussian measure, in particular under the Wiener measure. We also refer to \cite{Schell2023,Bayer2025} for related approximation results based on robust signatures.

The $L^p$-universal approximation results developed in this paper are particularly well suited to applications in stochastic analysis and mathematical finance. They provide a rigorous theoretical foundation for signature-based models and methods in mathematical finance, including applications to the pricing and hedging of financial derivatives, model calibration, portfolio optimization, and stochastic control. In particular, our results justify the universality of signature-based models driven by Brownian noise, which have recently been introduced as flexible alternatives to classical stochastic differential equation models; see, e.g., \cite{Arribas2021,Cuchiero2023,Cuchiero2025}. Indeed, Proposition~\ref{prop: SDE signature approx} shows that such models can approximate the solutions of a broad class of stochastic differential equations, independently of the specific drift and diffusion coefficients. This establishes signature-based models as a universal approximation class for Brownian-driven stochastic dynamics, which are frequently used in financial modelling.

\medskip
\noindent\textbf{Organization of the paper:} In Section~\ref{sec: preliminaries}, we recall the underlying concepts of weighted spaces, signatures, and rough path theory. The universal approximation theorems in $L^p$ and weighted spaces are established in Section~\ref{sec: global approx}, both for general path-dependent and non-anticipative functionals on suitable rough path spaces. In Section~\ref{sec: gaussian process}, we demonstrate that these universal approximation results apply to Gaussian processes, like the fractional Brownian motion, and to $p$-integrable progressively measurable stochastic processes adapted to the Brownian filtration, including solutions to stochastic differential equations. Appendix~\ref{sec: appendix} presents auxiliary results on the space of stopped rough paths.

\medskip
\noindent\textbf{Acknowledgments:} M. Ceylan gratefully acknowledges financial support by the doctoral scholarship programme from the Avicenna-Studienwerk, Germany.

\section{Preliminaries}\label{sec: preliminaries}

In this section, we introduce the notation and essential background on weighted spaces, signatures, and rough path theory. We refer to \cite{Friz2010,Friz2020,Cuchiero2024} for a more detailed introduction to these topics.

\subsection{Essentials on weighted spaces}

Let $T>0$ be a fixed finite time horizon and, for $d\in\N$, let $\R^d$ be the standard $d$-dimensional Euclidean space equipped with the norm $|x|:=(\sum_{i=1}^d x_i^2)^{1/2}$ for $x=(x_1,\dots,x_d)\in\R^d$. The space of continuous linear maps $f$ from the normed space $(X,\|\,\cdot\,\|_X)$ to the normed space $(Y,\|\,\cdot\,\|_Y)$ is denoted by $\cL(X;Y)$, which is equipped with the norm $\|f\|_{\cL(X;Y)}:=\sup_{x\in X,\|x\|_X\le 1}\|f(x)\|_Y$. Furthermore, if $Y=\R$, the topological dual space of $X$, denoted by $X^\ast$, is identified with $\cL(X;\R)$. Elements of $X^\ast$ are linear functionals $\ell\colon X\to\R$ and the norm on $X^\ast$ is defined by $\|\ell\|_{X^\ast}:=\sup_{x\in X,\|x\|_X\le 1}|\ell(x)|$.

\medskip

For a Hausdorff topological space $(X,\tau_X)$ and a normed space $(E,\|\,\cdot\,\|_E)$, the space of continuous functions $f\colon X\to E$ is denoted by $C(X;E)$ and $C_b(X;E)\subseteq C(X;E)$ denotes the vector subspace of bounded functions. Whenever $E=\R$, we simplify the notation to $C(X):=C(X;\R)$ and $C_b(X):=C_b(X;\R)$, respectively. We write $C_b^k=C_b^k(\R^m;\cL(\R^d;\R^m))$ for the space of $k$-times continuously differentiable functions $f\colon \R^m\to\cL(\R^d;\R^m)$ such that $f$ and all its derivatives up to order $k$ are continuous and bounded, and equip the space $C_b^k=C_b^k(\R^m;\cL(\R^d;\R^m))$ with the norm
\begin{equation*}
  \|f\|_{C_b^k}:=\|f\|_{\infty}+\|Df\|_{\infty}+\ldots+\|D^kf\|_{\infty},
\end{equation*}
where $D^r f$ denotes the $r$-th order derivative of $f$ and $\|\,\cdot\,\|_\infty$ denotes the supremum norm on the corresponding spaces of operators.

For a measure space $(X,\mathcal A,\mu)$ and $1\le p<\infty$, the (vector-valued) Lebesgue space $L^p(X,\mu;\R^d)$ is defined as the space of (equivalence classes of) $\mathcal A$-measurable functions $f\colon X\to\R^d$ such that
\begin{equation*}
  \|f\|_{L^p(X,\mu;\R^d)} := \Bigl(\int_X |f(x)|^p \,\dd\mu(x)\Bigr)^{\frac{1}{p}} < \infty.
\end{equation*} 
For $d=1$, we simply write $L^p(X):=L^p(X,\mu):= L^p(X,\mu;\R)$ and $\|\cdot\|_{L^p(X)}:=\|\cdot\|_{L^p(X,\mu;\R^d)}$.

\medskip

In the following, we recall the framework of weighted spaces introduced in \cite{Cuchiero2024}, with slight adaptations that are crucial for our purposes. We begin by defining a weighted space and, subsequently, the corresponding weighted function space.

\medskip

Let $(X,\tau_X)$ be a completely regular Hausdorff topological space. A function $\psi\colon X\to (0,\infty)$ is called an admissible weight function if every pre-image $K_R:=\psi^{-1} ((0,R])=\{x\in X: \psi(x)\le R\}$ is compact with respect to $\tau_X$, for all $R>0$. In this case, we call the pair $(X,\psi)$ a weighted space.

Furthermore, we define the vector space 
\begin{equation*}
  B_\psi(X):=\Bigl\{f\colon X\to \R: \sup_{x\in X}\frac{|f(x)|}{\psi(x)}<\infty\Bigr\},
\end{equation*}
consisting of functions $f\colon X\to \R$, whose growth is controlled by the growth of the weight function $\psi\colon X\to (0,\infty)$, which we equip with the weighted norm $\|\,\cdot\,\|_{\cB_\psi(X)}$ given by
\begin{equation}\label{eq:weightednorm}
  \|f\|_{\cB_\psi(X)}:=\sup_{x\in X}\frac{|f(x)|}{\psi(x)},\quad f\in B_\psi(X).
\end{equation}
Note that the embedding $C_b(X)\hookrightarrow B_\psi(X)$ is continuous, allowing us to introduce the space
\begin{equation*}
  \mathcal{B}_\psi(X)
  := \overline{C_b(X)}^{\,\|\cdot\|_{\mathcal{B}_\psi(X)}},
\end{equation*}
which is the closure of $C_b(X)$ with respect to the norm $\|\,\cdot\,\|_{\cB_\psi(X)}$. Note that $\mathcal{B}_\psi(X)$ is a Banach space with the norm~\eqref{eq:weightednorm}. We refer to $\cB_\psi(X)$ as a weighted function space.

\subsection{Algebraic setting for signatures}

The n-fold tensor product of $ \R^d$ is given by
\begin{equation*}
  (\R^d)^{\otimes 0}:=\R \quad \text{and}\quad(\R^d)^{\otimes n}:=\underbrace{\R^d\otimes\ldots\otimes\R^d}_{n},\quad \text{for }n\in \N.
\end{equation*}
Let $(e_1, \ldots, e_d)$ be the canonical basis of $\R^d$. It is well-known that $\{e_{i_1}\otimes\cdots\otimes e_{i_n}: i_1,\ldots,i_n\in \{1,\ldots,d\}\}$ is a canonical basis for $(\R^d)^{\otimes n}$ and we denote by $e_\emptyset$ the basis element of $(\R^d)^{\otimes 0}$.
Then, every $a^{(n)}\in(\R^d)^{\otimes n}$ admits the coordinate representation
\begin{equation*}
a^{(n)}=\sum_{i_1,\dots,i_n=1}^d a_{i_1,\dots,i_n}\, e_{i_1}\otimes\cdots\otimes e_{i_n},
\end{equation*}
and we equip $(\R^d)^{\otimes n} $ with the usual Euclidean norm
\begin{equation*}
  | a^{(n)}|_{(\R^d)^{\otimes n}}:=\bigg(\sum_{i_1,\ldots,i_n=1}^d|a_{i_1,\ldots,i_n}|^2\bigg)^{1/2},\quad\text{for }  a^{(n)}\in (\R^d)^{\otimes n}.
\end{equation*}
When no confusion may arise, we write $| a^{(n)} |$ instead of $| a^{(n)}|_{(\R^d)^{\otimes n}}$.

For $d \in \mathbb{N}$, the extended tensor algebra on $\R^{d}$ is defined as
\begin{equation*}
  T((\R^{d})):=\Bigl\{\mathbf{a}:=(a^{(0)}, \ldots,  a^{(n)}, \ldots): a^{(n)} \in(\R^{d})^{\otimes n}\Bigr\},
\end{equation*}
and $a^{(i)}$ is called tensor of level $i$. We equip $T((\R^d))$ with the standard addition ``$+$'', tensor multiplication ``$\otimes$'', and scalar multiplication ``$\cdot$'' defined by
\begin{align*}
  \mathbf{a}+\mathbf{b} & :=\Bigl( a^{(0)}+ b^{(0)}, \ldots,  a^{(n)}+ b^{(n)}, \ldots\Bigr), \\
  \mathbf{a} \otimes \mathbf{b} & :=\Bigl( c^{(0)}, \ldots,  c^{(n)}, \ldots\Bigr),\\
  \lambda\cdot \mathbf{a} & :=\Bigl(\lambda  a^{(0)}, \ldots, \lambda a^{(n)}, \ldots\Bigr),
\end{align*}
for $\mathbf{a}=(a^{(n)})_{n=0}^{\infty}, \mathbf{b} =(b^{(n)})_{n=0}^{\infty}\in T((\R^{d}))$ and $\lambda \in \R$, where $ c^{(n)}:=\sum_{k=0}^{n} a^{(k)} \otimes  b^{(n-k)}$. Let us remark that $(T((\mathbb{R}^{d})),+, \cdot, \otimes)$ is a real non-commutative algebra with neutral element $\mathbf{1}=(1,0, \ldots, 0, \ldots)$. Similarly, we define the truncated tensor algebra of order $N \in \N$ by
\begin{equation*}
  T^{N}(\R^{d}):=\Bigl\{\mathbf{a} \in T((\R^{d})):  a^{(n)}=0, \forall n>N\Bigr\},
\end{equation*}
which we equip with the norm 
\begin{equation*}
  \|\mathbf a\|_{T^N(\R^d)}:=\max_{n=0,\ldots, N}| a^{(n)}|_{(\R^d)^{\otimes n}},\quad \text{for } \mathbf a=(a^{(n)})_{n=0}^N \in T^N(\R^d).
\end{equation*}
Note that $T^{N}(\R^{d})$ has dimension $\sum_{i=0}^{N} d^{i}=$ $(d^{N+1}-1) /(d-1)$. Additionally, we define the tensor algebra $T(\R^d)=\bigcup_{n\in\N}T^n(\R^d)$ and consider the truncated tensor subalgebras $T_0^N(\R^d), T_1^N(\R^d)\subset T^N(\R^d)$ of elements $\mathbf a\in T^N(\R^d)$ with $a^{(0)}=0, a^{(0)}=1$, respectively. Observe that $T_1^N(\R^d)$ is a Lie group under $\otimes$, with unit element $\mathbf 1=(1,0,\ldots,0)$.

The Lie algebra that is generated from $\{\mathbf{e}_1, \dots, \mathbf{e}_d\}$, where $\mathbf{e}_i := (0,e_i,0,\dots) \in T(\R^d)$, and the commutator bracket
\begin{equation*}
  [\mathbf{a},\mathbf{b}] = \mathbf{a} \otimes \mathbf{b} - \mathbf{b} \otimes \mathbf{a}, \qquad \mathbf{a}, \mathbf{b} \in T(\R^d),
\end{equation*}
is called the free Lie algebra $\mathfrak{g}(\R^d)$ over $\R^d$, see e.g. \cite[Section~7.3]{Friz2010}. It is a subalgebra of $T_0((\R^d))$, where we define for $c \in \R$, the tensor subalgebra $T_c((\R^d)) := \{\mathbf{a} = (a^{(n)})_{n=0}^\infty \in T((\R^d)): a^{(0)} = c\}$. The free Lie group $G((\R^d)) := \exp(\mathfrak{g}(\R^d))$ is defined as the tensor exponential of $\mathfrak{g}(\R^d)$, i.e., the image of $\mathfrak{g}(\R^d)$ under the map
\begin{equation*}
  \exp_{\otimes} \colon T_0((\R^d)) \to T((\R^d)), \qquad \mathbf{a} \mapsto 1 + \sum_{k=1}^{\infty} \frac{1}{k!} \mathbf{a}^{\otimes k}.
\end{equation*}
$G((\R^d))$ is a subgroup of $T_1((\R^d))$. In fact, $(G((\R^d)),\otimes)$ is a group with unit element $(1,0,\dots,0,\dots)$, and for all $\mathbf{g} = \exp_{\otimes}(\mathbf{a}) \in G((\R^d))$, the inverse with respect to $\otimes$ is given by $\mathbf{g}^{-1} = \exp_{\otimes}(-\mathbf{a})$, for $\mathbf{g} = \exp_{\otimes}(\mathbf{a}) \in G((\R^d))$. We call elements in $G((\R^d))$ group-like elements. For $N\in \N$, we define the free step-$N$ nilpotent Lie algebra $\mathbf {\mathfrak g}^N(\R^d)\subset T_0^N(\R^d)$ with
\begin{equation*}
  \mathbf {\mathfrak g}^N(\R^d):=\{0\}\oplus\R^d\oplus[\R^d,\R^d]\oplus\ldots\oplus\underbrace{[\R^d,[\ldots,[\R^d,\R^d]]]}_{(N-1)\text{ brackets}},
\end{equation*}
where $(\mathbf g,\mathbf h)\mapsto [\mathbf g,\mathbf h]:=\mathbf g\otimes\mathbf h-\mathbf h\otimes\mathbf g\in T_0^N(\R^d)$ denotes the Lie bracket for $\mathbf g, \mathbf h\in T^N(\R^d)$, see \cite[Chapter~7.3.2 and Definition~7.25]{Friz2010}. The image $G^N(\R^d):=\exp(\mathbf{\mathfrak g}^N(\R^d))$ is a (closed) sub-Lie group of $(T_1^N(\R^d),\otimes)$, called the free nilpotent group of step $N$ over $\R^d$, see \cite[Theorem~7.30]{Friz2010}.

We define $I:=(i_1,\ldots,i_n)$ as a $n$-dimensional multi-index of non-negative integers, i.e. $i_j\in\{1,\ldots,d\}$ for every $j\in\{1,2,\ldots,n\}$. Note that $|I|:=n$ and the empty index is given by $I:=\emptyset$ with $|I|=0$. For $n\ge 1$ or $n\ge2$, we write $I^\prime:=(i_1,\ldots,i_{n-1})$ and $I^{\prime\prime}:=(i_1,\ldots,i_{n-2})$, respectively. Moreover, for each $|I|\ge 1$, we set $e_I:=e_{i_1}\otimes\cdots\otimes e_{i_n}$. This allows us to write $\mathbf{a} \in T((\R^d))$ (and $\mathbf{a} \in T(\R^d)$) as
\begin{equation*}
  \mathbf{a} = \sum_{|I| \geq 0} \langle e_I,\mathbf a\rangle e_I,
\end{equation*}
where $\langle \cdot,\cdot\rangle$ is defined as the inner product of $(\R^d)^{\otimes n}$ for each $n\ge 0$. 

For two multi-indices $I = (i_1, \ldots, i_{|I|})$ and $J = (j_1, \ldots, j_{|J|})$ with entries in $\{1,\ldots,d\}$, the shuffle product is recursively defined by
\begin{equation*}
  e_I \shuffle e_J := (e_{I'} \shuffle e_J) \otimes e_{i_{|I|}} + (e_I \shuffle e_{J'}) \otimes e_{j_{|J|}},
\end{equation*}
with $e_I \shuffle e_\emptyset := e_\emptyset \shuffle e_I := e_I$. For all $\mathbf{a} \in G((\R^d))$, the shuffle product property holds, i.e., for two multi-indices $I = (i_1, \ldots, i_{|I|})$ and $J = (j_1, \ldots, j_{|J|})$, it holds that
\begin{equation*}
  \langle e_I, \mathbf{a} \rangle \langle e_J, \mathbf{a} \rangle = \langle e_I \shuffle e_J, \mathbf{a} \rangle.
\end{equation*}

\subsection{Essentials on rough path theory}

Let $(E,\|\,\cdot\,\|_E)$ be a normed space. For $\alpha\in (0,1]$, the $\alpha$-H{\"o}lder norm of a path $X\in C([0,T];E)$ is given by
\begin{equation*}
  \|X\|_\alpha:=\sup_{0\le s<t\le T}\frac{\|X_t-X_s\|_E}{|t-s|^\alpha}.
\end{equation*}
We write $C^\alpha([0,T];E)$ for the space of all paths $X\in C([0,T];E)$ which satisfy $\|X\|_\alpha<\infty$. The $1$-variation of a continuous path $X\colon [0,T]\to E$ is defined by
\begin{equation*}
  \|X\|_{1\textup{-var}}:=\sup_{\mathcal D\subset [0,T]}\sum_{t_i\in\mathcal D}\|X_{t_i}-X_{t_{i-1}}\|_{E},
\end{equation*}
where the supremum is taken over all partitions $\mathcal D=\{0=t_0<t_1<\cdots<t_n=T\}$ of the interval $[0,T]$ and $\sum_{t_i\in\mathcal D}$ denotes the summation over all points in $\mathcal D$. If $\|X\|_{1\textup{-var}}<\infty$, we say that $X$ is of bounded variation or of finite $1$-variation on $[0,T]$. The space of continuous paths of bounded variation on $[0,T]$ with values in $E$ is denoted by $C^{1\textup{-var}}([0,T];E)$. 

Let $\Delta_T:=\{(s,t)\in [0,T]^2:s\le t\}$ be the standard $2$-simplex. For $\alpha\in(0,1]$ and a two-parameter function $\X^{(2)}\colon \Delta_T \to E$, we define
\begin{equation*}
  \|\X^{(2)}\|_\alpha:=\sup_{0\le s < t\le T}\frac{\|\X_{s,t}^{(2)}\|_E}{|t-s|^\alpha},\quad (s,t)\in\Delta_T,
\end{equation*}
and denote by $C_2^{\alpha}(\Delta_T;E)$ the space of all continuous functions $\X^{(2)}\colon \Delta_T\to E$ which satisfy $\|\X^{(2)}\|_\alpha<\infty$. In what follows, for a path $X\in C([0,T];\R^d)$, we will often use the shorthand notation
\begin{equation*}
  X_{s,t}:=X_t-X_s, \quad (s,t)\in\Delta_T.
\end{equation*}

Let $\alpha\in(\frac{1}{3},\frac{1}{2}]$ and $X\in C^{\alpha}([0,T];\R^d)$. A path $Y\in C^{\alpha}([0,T];\R^m)$ is said to be controlled by $X$ if there exists a path $Y^{\prime}\in C^{\alpha}([0,T];\mathcal{L}(\R^d;\R^m))$ such that the remainder term $R^Y\in C^{2\alpha}_2([0,T];\R^m)$ given through the relation
\begin{equation*}
  Y_{s,t}=Y_s^\prime X_{s,t}+R_{s,t}^Y,\quad (s,t)\in\Delta_T,
\end{equation*}
satisfies $\|R^Y\|_{2\alpha}<\infty.$ The path $Y^\prime$ is called Gubinelli derivative of $Y$. The set of controlled paths $(Y,Y^\prime)$ is denoted by $\mathcal{D}_X^{2\alpha}([0,T];\R^m)$, see \cite[Definition~4.6]{Friz2020}.

\medskip

For a path $X\in C^{1\textup{-var}}([0,T];\R^d)$ of finite variation, we denote by $\X^N$ the signature truncated at level $N$, which is given by
\begin{equation*}
  \X_{s,t}^N:=\Bigl(1,\int_{s<u<t}\dd X_u,\ldots, \int_{s<u_1<\ldots<u_N<t}\dd X_{u_1}\otimes\cdots\otimes\dd X_{u_N}\Bigr)\in T^N(\R^d),
\end{equation*}
for $0\le s\le t\le T$, where the integrals are defined in a classical Riemann--Stieltjes sense. The signature $\X_{s,t}$ of the path $X$ on $[s,t]$, given by
\begin{equation*}
  \X_{s,t} :=(1,X_{s,t},\X_{s,t}^{(2)},\ldots,)\in T((\R^d)),
\end{equation*}
for $0\le s\le t\le T$, where 
\begin{equation*}
  \X_{s,t}^{(n)}:=\int_{s<u_1<\ldots<u_n<t}\dd X_{u_1}\otimes\cdots\otimes\dd X_{u_n}
\end{equation*}
denotes the $n$-th component of $\X_{s,t}$. For $s=0$ we simply write $\X_t$.
 
Furthermore, the Carnot--Carathéodory norm $\|\,\cdot\,\|_{cc}$ on $G^N(\R^d)$ is defined by
\begin{equation*}
  \|\mathbf g\|_{cc}:=\inf\biggl\{\int_{0}^{T}|\dd X_t|\,:\,X\in C^{1\textup{-var}}([0,T];\R^d) \text{ such that } \bbX_T^N=\mathbf g\biggr\},
\end{equation*}
for $\mathbf g\in G^N(\R^d)$, which induces a metric via
\begin{equation*}
  d_{cc}(\mathbf g, \mathbf h):=\|\mathbf{g}^{-1}\otimes \mathbf h\|_{cc},\quad \text{for }\mathbf g, \mathbf h \in G^N(\R^d).
\end{equation*}

For $\alpha\in(0,1]$, a continuous path $\bX\colon [0,T]\to G^{\lfloor 1/ \alpha\rfloor}(\R^d)$ of the form
\begin{equation*}
  [0,T]\ni t\mapsto \bX_t:=\Bigl(1,\X^{(1)}_t,\bbX_t^{(2)},\ldots,\bbX_t^{(\lfloor 1/ \alpha\rfloor)}\Bigr)\in G^{\lfloor 1/ \alpha\rfloor}(\R^d)
\end{equation*}
with $\bX_0:=\mathbf{1}:=(1,0,\ldots,0)\in G^{\lfloor 1/ \alpha\rfloor}(\R^d)$ is called weakly geometric $\alpha$-H{\"o}lder rough path if the $\alpha$-H{\"o}lder norm
\begin{equation*}
  \|\bX\|_{cc,\alpha}:=\sup_{\overset{s,t\in[0,T]}{s<t}}\frac{d_{cc}(\bX_s,\bX_t)}{|s-t|^\alpha}<\infty,
\end{equation*}
where $\lfloor 1/ \alpha\rfloor:= \max \{k\in \mathbb{Z}\,:\, k\leq1/\alpha\}$. We denote by $C^\alpha([0,T];G^{\lfloor 1/ \alpha\rfloor}(\R^d))$ the space of such weakly geometric $\alpha$-H{\"o}lder rough paths, which we equip with the metric
\begin{equation*}
  d_{cc,\alpha}(\bX,\bY):=\sup_{\overset{s,t\in[0,T]}{s<t}}\frac{d_{cc}(\bX_{s,t},\bY_{s,t})}{|s-t|^{\alpha}},
\end{equation*}
for $\bX,\bY\in C^\alpha([0,T];G^{\lfloor 1/ \alpha\rfloor}(\R^d))$, where $\bX_{s,t}:=\bX_s^{-1}\otimes \bX_t\in G^{\lfloor 1/ \alpha\rfloor}(\R^d)$. Moreover, we introduce the metric
\begin{equation*}
  d_{cc,\infty}(\bX,\bY):=\sup_{t\in[0,T]}d_{cc}(\bX_t,\bY_t),
\end{equation*}
for $\bX,\bY\in C^\alpha([0,T];G^{\lfloor 1/ \alpha\rfloor}(\R^d)).$

The space of geometric $\alpha$-H{\"o}lder rough paths, denoted by
\begin{equation*}
  C^{0,\alpha}([0,T];G^{\lfloor 1/\alpha\rfloor}(\R^d)),
\end{equation*}
is defined as the closure of canonical lifts of smooth paths with respect to the $\alpha$-H{\"o}lder norm $\|\,\cdot\,\|_{cc,\alpha}$, that is, for every $\bX\in C^{0,\alpha}([0,T];G^{\lfloor 1/\alpha\rfloor}(\R^d))$ there exist a sequence of smooth paths $X^n$ such that
\begin{equation*}
  d_{cc,\alpha}(\X^n,\bX)\to 0\text{ as } n\to\infty,
\end{equation*}
where $\X^n$ is the $\lfloor 1/\alpha\rfloor$-step signature of $X^n$. The space $ C^{0,\alpha}([0,T];G^{\lfloor 1/\alpha\rfloor}(\R^d))$ is equipped with the metric
\begin{equation*}
  d_{cc,\alpha^{\prime}}(\bX,\bY):=\sup_{\overset{s,t\in[0,T]}{s<t}}\frac{d_{cc}(\bX_{s,t},\bY_{s,t})}{|s-t|^{\alpha^{\prime}}},
\end{equation*}
for $\bX,\bY\in C^{0,\alpha}([0,T];G^{\lfloor 1/ \alpha\rfloor}(\R^d))$ and $0\le\alpha^{\prime}\le \alpha$, where $\bX_{s,t}:=\bX_s^{-1}\otimes \bX_t\in G^{\lfloor 1/ \alpha\rfloor}(\R^d)$.

The space of geometric $\alpha$-H{\"o}lder rough paths $C^{0,\alpha}([0,T];G^{\lfloor 1/ \alpha\rfloor}(\R^d))$ is a closed subset of the space of weakly geometric $\alpha$-H{\"o}lder rough paths $C^{\alpha}([0,T];G^{\lfloor 1/ \alpha\rfloor}(\R^d))$ and thus complete, see \cite[Definition~8.19]{Friz2010}. The distinction between geometric and weakly geometric rough paths is discussed in detail in \cite{Friz2006}.

\medskip

Let us introduce the truncated signature at level $N>\lfloor 1/\alpha\rfloor$ of a (weakly) geometric $\alpha$-H{\"o}lder rough path $\bX\in C^{0,\alpha}([0,T];G^{\lfloor 1/\alpha\rfloor}(\R^d))$ as the unique Lyons' extension, see e.g. \cite[Theorem~9.5, Corollary~9.11~(ii)]{Friz2010}, yielding a path $\X^N\colon [0,T]\to G^N(\R^d)$. Then, $\X^N$ has finite $\alpha$-H{\"o}lder norm $\|\,\cdot\,\|_{cc,\alpha}$ and starts with the unit element $\mathbf 1:=(1,0,\ldots,0)\in G^N(\R^d)$, and the signature of $\bX$ is given by
\begin{equation*}
  [0,T]\ni t\mapsto \X_t=\Bigl(1,\X^{(1)}_t,\X^{(2)}_t,\ldots,\X_t^{(\lfloor 1/\alpha\rfloor)},\ldots,\X_t^{(N)},\ldots\Bigl).
\end{equation*}

\begin{remark}\label{rem: weak topology}
  Note that we equip the space of geometric $\alpha$-H{\"o}lder rough paths with a weaker topology than the norm topology, to obtain an admissible weight function, i.e., the closed unit ball is then compact (the pre-image $K_R=\psi^{-1}((0,R])$ is then compact w.r.t. the weaker topology). More precisely, in \cite[p.~37]{Cuchiero2024} it is discussed that the space $C^{0,\alpha}([0,T];G^{\lfloor 1/\alpha\rfloor}(\R^d))$ equipped with the metric $d_{cc,\alpha^\prime}$ and the weight function
  \begin{equation*}
    \psi(\bX):=\exp(\beta\|\bX\|_{cc,\alpha}^\gamma)
  \end{equation*}
  is a weighted space for some $\beta>0$ and $\gamma\ge\lfloor 1/\alpha\rfloor$, which follows from the compact embedding
  \begin{equation*}
    (C^{0,\alpha}([0,T];G^{\lfloor 1/\alpha\rfloor}(\R^d)),d_{cc,\alpha})\hookrightarrow (C^{0,\alpha^\prime}([0,T];G^{\lfloor 1/\alpha\rfloor}(\R^d)),d_{cc,\alpha^\prime})
  \end{equation*}
  for $0< \alpha^\prime<\alpha\le 1$, see \cite[Remark~A.7~(i) and p.~37]{Cuchiero2024}. We refer to \cite{Cuchiero2024} for an extensive discussion of the weaker topologies on the space of geometric $\alpha$-H{\"o}lder rough paths, including the weak-$\ast$-topology.
\end{remark}

\section{Global approximation with rough path signatures}\label{sec: global approx}

In this section, we establish $L^p$-universal approximation theorems for linear functionals acting on signatures of time-extended rough paths. Our approach builds on the universal approximation theorem for weighted spaces proven in \cite{Cuchiero2024}. We begin by deriving a universal approximation result for $p$-integrable functionals on the rough path space and then present an analogous theorem for $p$-integrable non-anticipative functionals.

\subsection{General functionals}

In this subsection, we consider the space $(\widehat{C}_{d,T}^{\alpha},\cB(\widehat{C}_{d,T}^{\alpha}))$ of time-extended rough paths, which is defined as
\begin{equation*}
  \widehat{C}_{d,T}^{\alpha}:=\Bigl\{\hbX\in C^{0,\alpha}([0,T];G^{\lfloor 1/ \alpha\rfloor}(\R^{d+1})): \langle e_0,\hbX_t\rangle:=t \text{ for all }t\in[0,T]\Bigr\},
\end{equation*}
that is, the subspace of $C^{0,\alpha}([0,T]; G^{\lfloor 1/\alpha \rfloor}(\mathbb{R}^{d+1}))$, where the $0$-th coordinate represents the running time, for $\alpha \in (0,1)$. The space $(\widehat{C}_{d,T}^{\alpha},\cB(\widehat{C}_{d,T}^{\alpha}))$ is equipped with the $\alpha^\prime$-H{\"o}lder metric $d_{cc,\alpha^\prime}$ for some $0<\alpha^\prime<\alpha$ and let $\nu$ be a finite Borel measure on $(\widehat{C}_{d,T}^{\alpha},\cB(\widehat{C}_{d,T}^{\alpha}))$, i.e. $\nu(\widehat{C}_{d,T}^{\alpha})<\infty$, where $\cB(\widehat{C}_{d,T}^{\alpha})$ denotes the Borel $\sigma$-algebra on $\widehat{C}_{d,T}^{\alpha}$. Moreover, in what follows, we work with the weight function
\begin{equation}\label{eq: weight function}
  \psi(\hbX):=\exp(\beta\|\hbX\|_{cc,\alpha}^\gamma)
\end{equation}
for some $\beta>0$ and $\gamma\ge\lfloor 1/\alpha\rfloor$. Note that, by Remark~\ref{rem: weak topology}, the space $\widehat{C}_{d,T}^\alpha$ equipped with $d_{cc,\alpha^\prime}$ is a weighted space.

\begin{remark}
  The signature of a (rough) path determines the path only up to so-called tree-like equivalence; see \cite{Hambly2010,Boedihardjo2016}. By augmenting the path with time in the $0$-th coordinate, the signature of the resulting time-extended (rough) path uniquely determines the original path up to translation. This property is essential for applying a Stone--Weierstrass theorem in order to obtain universal approximation results for linear functionals on signatures. Although adding time is a natural and commonly used choice, this uniqueness feature can be achieved by extending a (rough) path with any strictly monotone one-dimensional path.
\end{remark}

\begin{remark}
  We emphasize that, in contrast to \cite{Cuchiero2024}, we do not work with the space of weakly geometric $\alpha$-H{\"o}lder rough paths, but rather with the space of geometric $\alpha$-H{\"o}lder rough paths. The reason is that the latter, when equipped with its natural $\alpha$-H{\"o}lder rough path topology, forms a Polish space. Consequently, a geometric $\alpha$-H{\"o}lder rough path $\bX$ can be regarded as a $C^{0,\alpha}([0,T]; G^{\lfloor 1/\alpha \rfloor}(\R^d))$-valued random variable, and its law $\mu_{\bX}$ is then a Borel measure on the corresponding Borel $\sigma$-algebra; see \cite[Appendix~A1]{Friz2010}.  In the weighted approximation results below, we shall use the weaker topology induced by $d_{cc,\alpha^\prime}$, where $0<\alpha^\prime<\alpha$. Since the embedding from the $\alpha$-H{\"o}lder space to the $\alpha^\prime$-H{\"o}lder space is continuous, every $d_{cc,\alpha^\prime}$-Borel set is also Borel with respect to the natural $\alpha$-H{\"o}lder topology. Thus the law of a geometric $\alpha$-H{\"o}lder rough path, initially defined on the natural Polish Borel space, also defines a Borel probability measure on the measurable space generated by the weaker $\alpha^\prime$-H{\"o}lder topology.
\end{remark}

To derive $L^p$-universal approximation theorems for linear functionals acting on signatures of time-extended rough paths, we rely on a slight modification of the universal approximation result for weighted spaces established in \cite[Theorem~5.4]{Cuchiero2024}.

\begin{proposition}[Universal approximation theorem on $\mathcal B_\psi(\widehat{C}_{d,T}^\alpha)$]\label{prop: weighted UAT 1}
  Let $\psi$ be the weight function given in \eqref{eq: weight function}. Then, the linear span of the set
  \begin{equation*}
    \Bigl\{\hbX\mapsto\langle e_I,\hbbX_T\rangle: I\in\{0,\ldots,d\}^N,N\in\N_0\Bigr\}
  \end{equation*}
  is dense in $\cB_\psi(\widehat{C}_{d,T}^\alpha)$, i.e., for every map $f\in\cB_\psi(\widehat{C}_{d,T}^\alpha)$ and every $\epsilon>0$ there exists a linear function $\boldsymbol\ell \colon T((\R^{d+1}))\to \R$  of the form $\hbbX_T\mapsto\boldsymbol\ell(\hbbX_T):=\sum_{|I|\le N}\ell_I\langle e_I,\hbbX_T\rangle$, for some $N\in\N_0$ and $\ell_I\in\R$, such that
  \begin{equation*}
    \sup_{\hbX\in \widehat{C}_{d,T}^\alpha}\frac{|f(\hbX)-\boldsymbol\ell(\hbbX_T)|}{\psi(\hbX)}<\epsilon.
  \end{equation*}
\end{proposition}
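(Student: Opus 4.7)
The plan is to derive this from the weighted Stone--Weierstrass theorem \cite[Theorem~5.4]{Cuchiero2024}, applied to the closed subspace $\widehat{C}_{d,T}^{\alpha}$ of the space of geometric $\alpha$-H\"older rough paths on $\R^{d+1}$. By Remark~\ref{rem: weak topology}, the latter is a weighted space under $\psi$ and $d_{cc,\alpha'}$, and the time-extension constraint $\langle e_0,\hbX_t\rangle = t$ cuts out a closed subspace, so the restriction inherits the weighted-space structure. Setting $\mathcal{A} := \spn\{\hbX \mapsto \langle e_I, \hbbX_T\rangle : I \in \{0,\ldots,d\}^N,\, N \in \N_0\}$, the task reduces to verifying that $\mathcal{A}$ is a unital subalgebra of $\mathcal{B}_\psi(\widehat{C}_{d,T}^{\alpha})$ that separates points.

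The algebraic conditions fall out of the shuffle identity: since
\[
  \langle e_I, \hbbX_T\rangle \, \langle e_J, \hbbX_T\rangle = \langle e_I \shuffle e_J, \hbbX_T\rangle,
\]
and $e_I \shuffle e_J$ is a finite integer combination of basis elements, the product of two generators lies back in $\mathcal{A}$; taking $I = \emptyset$ yields the constant function $1$. For point separation, if $\hbbX_T = \hbbY_T$ then the uniqueness theorem \cite{Hambly2010,Boedihardjo2016} forces $\hbX$ and $\hbY$ to be tree-like equivalent as paths in $\R^{d+1}$, and since their $0$-th coordinate is the strictly increasing map $t \mapsto t$, no non-trivial tree-like cancellation is possible, so $\hbX = \hbY$. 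The time augmentation is used here precisely to rule out this ambiguity.

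I would then verify $\mathcal{A} \subset \mathcal{B}_\psi(\widehat{C}_{d,T}^{\alpha})$. By Lyons' extension theorem \cite[Theorem~9.5]{Friz2010}, the signature levels of a geometric $\alpha$-H\"older rough path satisfy a factorial-type bound which in particular implies that for every multi-index $I$,
\[
  |\langle e_I, \hbbX_T\rangle| \leq C_{|I|}\bigl(1 + \|\hbX\|_{cc,\alpha}\bigr)^{|I|}.
\]
Since $\gamma \geq \lfloor 1/\alpha\rfloor \geq 1$, the weight $\psi(\hbX) = \exp(\beta \|\hbX\|_{cc,\alpha}^\gamma)$ dominates any such polynomial, so $\mathcal{A}$ embeds into $B_\psi$. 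To upgrade to $\mathcal{B}_\psi$, one truncates with a smooth cutoff in $\|\hbX\|_{cc,\alpha}$, producing bounded continuous approximants $f_R$ whose $\|\cdot\|_{\mathcal{B}_\psi}$-error is controlled by $\sup_{\|\hbX\|_{cc,\alpha} \geq R/2} C_{|I|}(1+\|\hbX\|_{cc,\alpha})^{|I|}/\psi(\hbX)$, which vanishes as $R \to \infty$. All hypotheses of the weighted Stone--Weierstrass theorem are then in place, and an application of \cite[Theorem~5.4]{Cuchiero2024} yields the desired density.

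The main obstacle I anticipate is the interplay between the two topologies on $\widehat{C}_{d,T}^{\alpha}$: the signature coordinates and the norm $\|\cdot\|_{cc,\alpha}$ are naturally continuous in the stronger metric $d_{cc,\alpha}$, whereas the weighted-space structure requires continuity in the weaker metric $d_{cc,\alpha'}$ in which the sublevel sets $K_R = \{\psi \leq R\}$ are compact. The compact embedding recorded in Remark~\ref{rem: weak topology} is what rescues the argument: on each $K_R$ the two topologies induce the same topology, so the truncated approximants $f_R$ are genuinely continuous with respect to $d_{cc,\alpha'}$ and thus belong to $C_b(\widehat{C}_{d,T}^{\alpha})$, as required.
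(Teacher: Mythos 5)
Your proposal is correct and follows essentially the same route as the paper, which simply defers to the proof of \cite[Theorem~5.4]{Cuchiero2024} (built on the weighted Stone--Weierstrass theorem \cite[Theorem~3.9]{Cuchiero2024}); you have merely written out the verification of its hypotheses (subalgebra via the shuffle identity, point separation via time-augmentation and signature uniqueness, $\psi$-moderate growth via the factorial bounds from Lyons' extension). One small imprecision: it is not true that $d_{cc,\alpha}$ and $d_{cc,\alpha'}$ induce the same topology on $K_R$; what you actually need, and what holds, is that the signature coordinates $\hbX\mapsto\langle e_I,\hbbX_T\rangle$ are continuous with respect to the weaker metric on sets bounded in $\|\cdot\|_{cc,\alpha}$, which follows from the continuity of the Lyons lift on such bounded sets, see \cite[Corollary~10.40]{Friz2010} --- this is exactly the argument used in the proof of Proposition~\ref{prop: weighted UAT 2}.
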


\begin{proof}
  The proof follows that of \cite[Theorem~5.4]{Cuchiero2024} verbatim, with the sole modification of replacing the weakly geometric rough path space by the geometric rough path space. The weighted-space hypothesis, in particular the admissibility of the weight, is ensured by Remark~2.1. The remaining ingredients used in the Stone--Weierstrass argument are unchanged under this replacement: signature coordinates are continuous, form an algebra by the shuffle identity, and the time-extended signature coordinates separate points also on the geometric rough path space. Hence, the assumptions of the weighted real-valued Stone--Weierstrass theorem \cite[Theorem~3.9]{Cuchiero2024} are verified exactly as in \cite[Theorem~5.4]{Cuchiero2024}. We therefore omit the details to avoid unnecessary repetition.
\end{proof}

We are now in a position to state a global universal approximation theorem for linear functionals acting on signatures of time-extended rough paths in the space $L^p(\widehat{C}_{d,T}^\alpha)$.

\begin{theorem}[$L^p$-universal approximation theorem on $\widehat{C}_{d,T}^\alpha$]\label{thm:Lpmain}
  Let $\psi$ be the weight function given in \eqref{eq: weight function},  $p\ge 1$, and $\int_{\widehat{C}_{d,T}^\alpha}\psi^p\dd\nu<\infty$. Moreover, we consider the set
  \begin{equation*}
    \mathcal L:=\Bigl\{f_\ell\,:\,f_\ell\colon\hbX\mapsto\boldsymbol\ell(\hbbX_T)=\sum_{|I|\le N}\ell_I\langle e_I,\hbbX_T\rangle,\,\ell_I\in\R,\,N\in\N_0,\,\hbX\in\widehat{C}_{d,T}^\alpha\Bigr\}.
  \end{equation*}
  Then, for every $f\in L^p(\widehat{C}_{d,T}^\alpha)$ and for every $\epsilon>0$, there exists a functional $f_\ell\in\mathcal L$ such that
  \begin{equation*}
    \|f-f_\ell\|_{L^p(\widehat{C}_{d,T}^\alpha)}<\epsilon.
  \end{equation*}
\end{theorem}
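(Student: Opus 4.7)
The plan is to carry out a two-step approximation: first replace $f\in L^p(\widehat C_{d,T}^\alpha)$ by a bounded continuous surrogate $g$, then apply the weighted universal approximation theorem (Proposition~\ref{prop: weighted UAT 1}) to $g$ and leverage the moment condition $\int \psi^p\,\dd\nu<\infty$ to transfer the resulting weighted-sup bound back into an $L^p$ bound. Fix $\epsilon>0$ and set $M:=\bigl(\int_{\widehat C_{d,T}^\alpha}\psi^p\,\dd\nu\bigr)^{1/p}\in (0,\infty)$.

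\textbf{Step 1 (Density of $C_b$ in $L^p$).} The space $(\widehat C_{d,T}^\alpha,d_{cc,\alpha'})$ is a metric space and $\nu$ is a finite Borel measure on it. By the standard combination of the regularity of finite Borel measures on a metric space and Urysohn's lemma, bounded continuous functions are dense in $L^p(\widehat C_{d,T}^\alpha,\nu)$. Thus, for the given $f\in L^p$, I choose $g\in C_b(\widehat C_{d,T}^\alpha)$ with
\begin{equation*}
\|f-g\|_{L^p(\widehat C_{d,T}^\alpha)}<\epsilon/2.
\end{equation*}

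\textbf{Step 2 (Weighted approximation of $g$).} Since $\psi\ge 1$ pointwise, any $g\in C_b(\widehat C_{d,T}^\alpha)$ satisfies $\|g\|_{\cB_\psi(\widehat C_{d,T}^\alpha)}\le \|g\|_\infty<\infty$, so in particular $g\in \cB_\psi(\widehat C_{d,T}^\alpha)$. Proposition~\ref{prop: weighted UAT 1} therefore yields, for $\delta:=\epsilon/(2M)>0$, a linear functional $f_\ell\in\cL$ with
\begin{equation*}
\sup_{\hbX\in \widehat C_{d,T}^\alpha}\frac{|g(\hbX)-f_\ell(\hbX)|}{\psi(\hbX)}<\delta.
\end{equation*}
This pointwise bound $|g-f_\ell|\le \delta\,\psi$ combined with the assumption $\int\psi^p\,\dd\nu<\infty$ gives
\begin{equation*}
\|g-f_\ell\|_{L^p(\widehat C_{d,T}^\alpha)}\le \delta\,\Bigl(\int_{\widehat C_{d,T}^\alpha}\psi^p\,\dd\nu\Bigr)^{1/p}=\delta M=\epsilon/2.
\end{equation*}
The triangle inequality then yields $\|f-f_\ell\|_{L^p(\widehat C_{d,T}^\alpha)}<\epsilon$, as required.

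\textbf{Main obstacle.} The non-trivial input is Proposition~\ref{prop: weighted UAT 1}, which is already invoked as a black box here; once it is in place, the argument reduces to the routine density of $C_b$ in $L^p$ plus the moment estimate. The only subtle point I need to check carefully is that the topology used on $\widehat C_{d,T}^\alpha$ (namely the $d_{cc,\alpha'}$-topology with $\alpha'<\alpha$) is still compatible with both steps: it supports Urysohn separation (it is metric) for Step~1, and it is precisely the topology under which $\psi$ is admissible and Proposition~\ref{prop: weighted UAT 1} applies for Step~2, so the two steps glue together consistently.
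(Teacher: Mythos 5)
Your proposal is correct and follows essentially the same route as the paper: reduce $f$ to a bounded continuous surrogate, apply Proposition~\ref{prop: weighted UAT 1}, and convert the weighted-sup bound into an $L^p$ bound via the moment condition $\int\psi^p\,\dd\nu<\infty$. The only difference is that you invoke the density of $C_b$ in $L^p$ for finite Borel measures on metric spaces as a standard fact, whereas the paper derives it explicitly by truncating $f$ and then applying Lusin's and Tietze's theorems; both justifications are valid.
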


\begin{proof}
  Let $f\in L^p(\widehat{C}_{d,T}^\alpha,\nu)$ and fix $\epsilon>0$.

  \textit{Step~1.} For any $K>0$, we can define the function $f_K(x):=1_{\{|f(x)|\le K\}}(x)f(x)$ for which we have $\|f-f_K\|_{L^p(\widehat{C}_{d,T}^\alpha)}\to 0$ as $K\to \infty$ by dominated convergence. Therefore, there is a $K^{\epsilon}>0$ such that
  \begin{equation*}
    \|f-f_{K^{\epsilon}}\|_{L^p(\widehat{C}_{d,T}^\alpha)}\le\frac{\epsilon}{3}.
  \end{equation*}

  \textit{Step~2.} By Lusin's theorem \cite[Theorem~2.5.17]{Denkowski2003}, there is a closed set $C^\epsilon\subset \widehat{C}_{d,T}^\alpha$, such that $f_{K^\epsilon}$ restricted to $C^\epsilon$ is continuous and $\nu(\widehat{C}_{d,T}^\alpha\setminus C^\epsilon)\le \frac{\epsilon^p}{(6 K^\epsilon)^p}$. By Tietze's extension theorem \cite[Theorem~3.6.3]{friedman1982}, there is a continuous extension $f^\epsilon\in C_b(\widehat{C}_{d,T}^\alpha;[-K^\epsilon,K^\epsilon])$ of $f_{K^\epsilon},$ such that
  \begin{equation*}
    \|f_{K^\epsilon}-f^\epsilon\|_{L^p(\widehat{C}_{d,T}^\alpha)}^p=\int_{\widehat{C}_{d,T}^\alpha\setminus C^\epsilon}|f_{K^\epsilon}-f^\epsilon|^p\dd\nu\le(2K^\epsilon)^p\nu(\widehat{C}_{d,T}^\alpha\setminus C^\epsilon)\le \Bigl(\frac{\epsilon}{3}\Bigr)^p.
  \end{equation*}

  \textit{Step~3.} Moreover, since by the definition of the weighted function space $\cB_\psi$ it holds that $C_b(\widehat{C}_{d,T}^\alpha)\subseteq \cB_\psi(\widehat{C}_{d,T}^\alpha)$, by Proposition~\ref{prop: weighted UAT 1} we can approximate $f^\epsilon$ by a linear function on the signature. More precisely, set $M:=\int_{\widehat{C}^{\alpha}_{d,T}}\psi^p\dd\nu<\infty$, then we have
  \begin{equation*}
    \|{f}^\epsilon-f_\ell\|^p_{\cB_\psi(\widehat{C}_{d,T}^\alpha)}=\Bigl(\sup_{\hbX\in\widehat{C}_{d,T}^\alpha}\frac{|{f}^\epsilon(\hbX)-\boldsymbol{\ell}(\hbbX_T)|}{\psi(\hbX)}
    \Bigr)^p< \frac{\epsilon^p}{3^p M}.
  \end{equation*}
  Hence, we get
  \begin{equation*}
    \|{f}^\epsilon-f_\ell\|^p_{L^p(\widehat{C}_{d,T}^\alpha)}\le\int_{\widehat{C}_{d,T}^\alpha}\psi^p\dd\nu~\|{f}^\epsilon-f_\ell\|^p_{\cB_\psi(\widehat{C}_{d,T}^\alpha)}< \Bigl(\frac{\epsilon}{3} \Bigr)^p.
  \end{equation*}

  Hence, combining Step 1-3 reveals that
  \begin{equation*}
    \|f-f_\ell\|_{L^p(\widehat{C}_{d,T}^\alpha)}\le\|f-f_{K^\epsilon}\|_{L^p(\widehat{C}_{d,T}^\alpha)}+\|f_{K^\epsilon}-f^\epsilon\|_{L^p(\widehat{C}_{d,T}^\alpha)}+
    \|{f}^\epsilon-f_\ell\|_{L^p(\widehat{C}_{d,T}^\alpha)}< \epsilon,
  \end{equation*}
  which concludes the proof.
\end{proof}

\begin{remark}
  Note that the integrability condition $\int_{\widehat{C}_{d,T}^\alpha} \psi^p \,\mathrm{d}\nu < \infty$, with the weight function $\psi(\hbX) = \exp\!\bigl(\beta \|\hbX\|_{cc,\alpha}^{\gamma}\bigr)$, corresponds to an exponential moment condition. Indeed, it is equivalent to
  \begin{equation*}
    \int_{\widehat C^\alpha_{d,T}}
    \exp\bigl(
        p\beta
        \|\hbX\|_{cc,\alpha}^{\gamma}
    \bigr)
    \dd\nu(\hbX)
    <\infty.
  \end{equation*}
  Thus, if $\nu$ is the law of a random rough path $\hbX$, this condition reads
  \begin{equation*}
    \E\Bigl[
        \exp\bigl(
            p\beta
            \|\hbX\|_{cc,\alpha}^{\gamma}
        \bigr)
    \Bigr]<\infty.
  \end{equation*}
  In particular, for Gaussian rough paths, including (fractional) Brownian motions, considered in Section~\ref{sec: gaussian process}, this condition follows from the Gaussian tail estimates for the rough path norm when choosing $\beta>0$ sufficiently small.
\end{remark}

\subsection{Non-anticipative functionals}

In this subsection, we derive a global universal approximation theorem on the space of stopped $\alpha$-H{\"o}lder rough paths. To that end, for $\alpha \in (0,1)$ we consider
\begin{equation*}
  \widehat{C}_{d,t}^{\alpha}:=\Bigl\{\hbX_{[0,t]}\in C^{0,\alpha}([0,T];G^{\lfloor 1/\alpha\rfloor}(\R^{d+1})):  \langle e_0,\hbX_s\rangle:=s \text{ for all } s\in[0,t] \Bigr\},
\end{equation*}
where $\hbX_{[0,t]}$ stands for the rough path $\hbX$, which is defined on $[0,T]$, restricted to the sub-interval $[0,t]$, for $t\in [0,T]$. Furthermore, we require the notion of stopped rough paths. For related definitions, we refer, for example, to \cite{Kalsi2020,Bayer2025} in the rough path setting and to \cite{Cuchiero2024b} in a rough semimartingale framework. We also note that spaces of stopped paths already appear in the context of functional It{\^o} calculus; see \cite{Cont2013,Dupire2019}.

\begin{definition}\label{def: stopped rough path}
  Let $\alpha\in(0,1]$, $t\in[0,T]$, and let $\hbX_{[0,t]}\in\widehat{C}_{d,t}^{\alpha}$ be a geometric $\alpha$-H{\"o}lder rough path. We define the stopped rough path at time $t$, $\hbX^t_{[0,T]}\in\widehat{C}_{d,T}^{\alpha}$, as follows.

  Set $N := \lfloor 1/\alpha\rfloor$. By geometricity, there exists a sequence of smooth time-extended paths $\widehat X^n_s := (s,X^n_s)$ on $[0,t]$ such that their canonical lifts $\hbbX^{n}$ (i.e.~their signatures truncated at level $N$) converge to $\hbX$ on $[0,t]$ in the $\alpha$-H{\"o}lder rough path metric $d_{cc,\alpha}$. For $r\in[0,T]$ we define the stopped paths
  \begin{equation*}
    \widehat X^{n,t}_r := (r,X^{n,t}_r):=(r,X^n_{r\wedge t}), \qquad r\in[0,T],
  \end{equation*}
  i.e.~the time-extension is not stopped, and let $\hbbX^{n,t}$ be their canonical lifts on $[0,T]$. We then set
  \begin{equation*}
    \hbX^t_{[0,T]} := \lim_{n\to\infty} \hbbX^{n,t}_{[0,T]},
  \end{equation*}
  where the limit is taken in $d_{cc,\alpha}$. In particular, $(\hbX^t)_s = \hbX_s$ for all $s\in[0,t]$.
\end{definition}

This construction is standard in the related rough path literature. For instance, it is used in \cite{Bayer2024}, where the well-definedness of the stopped rough path extension is stated. To make the present paper self-contained, we provide the details in Appendix~\ref{sec: appendix}. In particular, we verify the existence of the above limit and its independence of the chosen smooth approximating sequence.

\begin{definition}
  The space $\Lambda_T^\alpha$ of stopped geometric $\alpha$-H{\"o}lder rough paths is defined by
  \begin{equation*}
    \Lambda_T^\alpha:=\bigcup_{t\in[0,T]}\widehat C_{d,t}^\alpha
  \end{equation*}
  and equipped with the metric
  \begin{equation*}
    d_{\Lambda,\alpha^\prime}(\hbX_{[0,t]},\hbY_{[0,s]})=|t-s|+d_{cc,\alpha^\prime}(\hbX^t_{[0,T]},\hbY^s_{[0,T]}),
  \end{equation*}
  for some $ 0<\alpha^\prime<\alpha$.
\end{definition}

\begin{remark}\label{rem: quotient topology}
  The form of the metric $d_{\Lambda,\alpha^\prime}$ is inspired by \cite{Cont2013} and by the  stopped rough path metric introduced in \cite[Definition~3.1]{Bayer2024}. The latter is stated, for $s\le t$, by comparing a rough path on $[0,t]$ with the stopped extension of a rough path on $[0,s]$ over the interval $[0,t]$.
 
  In the present paper, we use a symmetric fixed-horizon version which is defined for arbitrary pairs $\hbX_{[0,t]},\hbY_{[0,s]}\in\Lambda_T^\alpha$. This convention is convenient for our purposes, since all stopped rough paths are represented by their stopped extensions on the same time interval. It also makes the metric property immediate: the term $|t-s|$ is the usual metric on $[0,T]$, and $d_{cc,\alpha^\prime}$ is a metric on the fixed-horizon rough path space; hence their sum is a metric on $\Lambda_T^\alpha$. The resulting topology is the natural analogue of the topology considered in \cite[Lemma~A.1]{Bayer2024}, with the $p$-variation distance replaced by the $\alpha^\prime$-H{\"o}lder rough path distance; see also \cite[Remark~2.2]{Bayer2025} for the H{\"o}lder setting. Since our fixed-horizon formulation differs slightly from the one used there, we include a self-contained proof in Appendix~\ref{sec: appendix}. In particular, we show that the topology induced by $d_{\Lambda,\alpha^\prime}$ coincides with the final topology induced by
  \begin{equation*}
    \phi\colon [0,T]\times \widehat C^\alpha_{d,T}\to\Lambda_T^\alpha, \qquad \phi(t,\hbX)=\hbX_{[0,t]}.
  \end{equation*}
  Moreover, we conclude that $\Lambda_T^\alpha$ is Polish when equipped with the $d_{\Lambda,\alpha}$ metric.
\end{remark}

To obtain a global universal approximation result on $\Lambda_T^\alpha$, we shall first verify that $(\Lambda_T^\alpha, \psi)$ forms a weighted space. For this purpose, we consider the weight function
\begin{equation}\label{eq:weight function non-anticipative}
  \psi(\hbX_{[0,t]}):=\exp(\beta\|\hbX^t_{[0,T]}\|_{cc,\alpha}^\gamma), \quad\hbX_{[0,t]}\in \Lambda_T^{\alpha},
\end{equation}
for some $\beta>0$ and $\gamma\ge\lfloor 1/\alpha\rfloor$.

\begin{lemma}\label{lem: Lambda is weighted space}
  Let $0<\alpha^{\prime}<\alpha<1$ and suppose that $\psi$ is defined as in \eqref{eq:weight function non-anticipative}. Then, $K_R:=\psi^{-1}((0,R])=\{\hbX_{[0,t]}\in\Lambda_T^\alpha: \psi(\hbX_{[0,t]})\le R\}$ is compact with respect to the final topology and $(\Lambda_T^\alpha,\psi)$ is a weighted space.
\end{lemma}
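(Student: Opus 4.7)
The plan is to prove compactness of every level set $K_R$ via sequential compactness in the metric space $(\Lambda_T^\alpha, d_{\Lambda, \alpha'})$. Since $\Lambda_T^\alpha$ is Polish by Remark~\ref{rem: quotient topology}, it is in particular a completely regular Hausdorff space, so compactness of every $K_R$ will yield the weighted-space property.

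The first step is to rewrite $\psi(\hbX_{[0,t]}) \le R$ as $\|\hbX^t_{[0,T]}\|_{cc,\alpha} \le M$, where $M := (\beta^{-1} \log R)^{1/\gamma}$. Thus to any $\hbX_{[0,t]} \in K_R$ one associates the stopped extension $\hbY := \hbX^t_{[0,T]} \in \widehat{C}_{d,T}^\alpha$ lying in $\tilde K_M := \{\hbY \in \widehat{C}_{d,T}^\alpha : \|\hbY\|_{cc,\alpha} \le M\}$, which by Remark~\ref{rem: weak topology} is compact in $d_{cc,\alpha'}$. Given a sequence $(\hbX^n_{[0,t_n]}) \subset K_R$ and the associated extensions $\hbY^n := (\hbX^n)^{t_n}_{[0,T]} \in \tilde K_M$, compactness of $[0,T] \times \tilde K_M$ supplies a subsequence (still indexed by $n$) along which $t_n \to t \in [0,T]$ and $\hbY^n \to \hbY \in \tilde K_M$ in $d_{cc,\alpha'}$. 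Since $\hbY^n_{[0,t_n]} = \hbX^n_{[0,t_n]}$ by construction, and since the quotient map $\phi(s, \cdot) = (\cdot)_{[0,s]}$ of Remark~\ref{rem: quotient topology} is continuous, this implies $\hbX^n_{[0,t_n]} = \phi(t_n, \hbY^n) \to \phi(t, \hbY) = \hbY_{[0,t]}$ in $(\Lambda_T^\alpha, d_{\Lambda,\alpha'})$.

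The main obstacle is verifying that the limit $\hbY_{[0,t]}$ still lies in $K_R$, equivalently that $\|(\hbY_{[0,t]})^t_{[0,T]}\|_{cc,\alpha} \le M$. The plan here is to show that the stopping structure is preserved in the limit, i.e.~$\hbY = \hbY^t$. By Definition~\ref{def: stopped rough path}, each $\hbY^n$ is a $d_{cc,\alpha}$-limit of canonical lifts of smooth time-extended paths whose spatial component is frozen on $[t_n, T]$; in particular, the level-one spatial part of $\hbY^n$ is constant on $[t_n, T]$, and every signature coordinate $\langle e_I, \hbY^n_{s,r}\rangle$ with $I$ containing at least one index in $\{1,\ldots,d\}$ and $t_n \le s \le r$ vanishes. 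Convergence in $d_{cc,\alpha'}$ yields pointwise convergence of $\hbY^n_{s,r}$ to $\hbY_{s,r}$ on $\Delta_T$, so for any fixed $s \in (t,T]$, $r \in [s,T]$ and $n$ with $t_n < s$ the corresponding constancy and vanishing transfer to $\hbY$ on $[t,T]$, extended to $s = t$ by continuity. Combining these identities via $\hbY_{s,r} = \hbY_{s,t} \otimes \hbY_{t,r}$ for $s \le t \le r$ yields $\hbY = \hbY^t$, hence $(\hbY_{[0,t]})^t_{[0,T]} = \hbY$, and closedness of $\tilde K_M$ in $d_{cc,\alpha'}$ gives $\|\hbY\|_{cc,\alpha} \le M$. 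Thus $\hbY_{[0,t]} \in K_R$, completing the sequential compactness argument.
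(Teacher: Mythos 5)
Your proof is correct and follows essentially the same route as the paper's: both reduce compactness of $K_R$ to compactness of a bounded set in $[0,T]\times\widehat{C}_{d,T}^\alpha$ via the compact embedding of $\alpha$- into $\alpha'$-H{\"o}lder rough paths and then push forward through the continuous quotient map $\phi$. Your sequential argument is in fact somewhat more careful than the paper's, since you explicitly verify that the limit point retains the stopped structure ($\hbY=\hbY^t$) and hence lies in $K_R$ — a closedness point the paper's appeal to Arz{\`e}la--Ascoli and Tychonoff glosses over.
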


\begin{proof}
  First observe that by the definition of the quotient map  $\phi$, we have
  \begin{equation*}
    K_R=\phi\Bigl([0,T]\times \{\hbX^{t}_{[0,T]}\in\widehat{C}_{d,T}^\alpha: \psi(\hbX_{[0,t]})\le R\}\Bigr).
  \end{equation*}
  Since $\phi$ is continuous, we only need to show that
  \begin{equation*}
    [0,T]\times \{\hbX^{t}_{[0,T]}\in \widehat{C}_{d,T}^\alpha: \psi(\hbX_{[0,t]})\le R\}
  \end{equation*}
  is compact in $[0,T]\times\widehat{C}_{d,T}^\alpha$ to obtain the compactness of $K_R$.
          
  Therefore, observe that the sets $\{\hbX^{t}_{[0,T]}\in \widehat{C}_{d,T}^\alpha: \psi(\hbX_{[0,t]})\le R\}$ are equicontinuous and pointwise bounded. Using that geometric $\alpha$-H{\"o}lder rough path spaces are compactly embedded in geometric $\alpha^\prime$-H{\"o}lder rough path spaces for $\alpha^\prime <\alpha$ (cf. \cite{Cuchiero2024}), we obtain that the sets $\{\hbX^{t}_{[0,T]}\in \widehat{C}_{d,T}^\alpha: \psi(\hbX_{[0,t]})\le R\}$ are, by the Arzèla--Ascoli theorem, see e.g. \cite[Theorem~4.43]{Folland1999}, compact with respect to the $\alpha^\prime$-H{\"o}lder norm. Since by Lemma~\ref{lem: phi cts} $\phi$ is continuous, $K_R$ is also compact for any $R>0$ due to Tychonoff's theorem. Thus, $(\Lambda_T^\alpha,\psi)$ is a weighted space. See also \cite[Lemma~2.10]{Bayer2025} for a similar proof.
\end{proof}

\begin{definition}
  A map $f\colon \Lambda_T^\alpha\to\R$ is called a non-anticipative functional if $f$ is measurable. A map $f\colon \Lambda_T^\alpha\to\R$ is called continuous if $f$ is continuous with respect to the metric $d_{\Lambda,\alpha^{\prime}}$.
\end{definition}

With these preparations in place, we can establish a global universal approximation result on $\mathcal{B}_\psi(\Lambda_T^\alpha)$.

\begin{proposition}[Universal approximation theorem on $\cB_\psi(\Lambda_T^\alpha)$]\label{prop: weighted UAT 2}
  Let $\psi$ be defined as in \eqref{eq:weight function non-anticipative}. Then, the linear span of the set
  \begin{equation*}
    \Bigl\{\hbX_{[0,t]}\mapsto\langle e_I,\hbbX_{t}\rangle: I\in\{0,\ldots,d\}^N,N\in\N_0\Bigr\}
  \end{equation*}
  is dense in $\cB_\psi(\Lambda_T^\alpha)$, i.e., for every map $f\in\cB_\psi(\Lambda_T^\alpha)$ and every $\epsilon>0$ there exists a linear function $\boldsymbol\ell\colon T((\R^{d+1}))\to \R$ of the form $\hbbX_t\mapsto\boldsymbol\ell(\hbbX_t):=\sum_{|I|\le N}\ell_I\langle e_I,\hbbX_t\rangle$, for some $N\in\N_0$ and $\ell_I\in\R$, such that
  \begin{equation*}
    \sup_{\hbX_{[0,t]}\in \Lambda_T^\alpha}\frac{|f(\hbX_{[0,t]})-\boldsymbol\ell(\hbbX_t)|}{\psi(\hbX_{[0,t]})}<\epsilon.
  \end{equation*}
\end{proposition}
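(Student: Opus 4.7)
The plan is to mirror the proof of Proposition~\ref{prop: weighted UAT 1}, i.e., to apply the weighted real-valued Stone--Weierstrass theorem \cite[Theorem~3.9]{Cuchiero2024} to the linear span
\[
  \mathcal A := \spn\bigl\{\hbX_{[0,t]}\mapsto\langle e_I,\hbbX_t\rangle \,:\, I\in\{0,\ldots,d\}^N,\,N\in\N_0\bigr\}.
\]
Lemma~\ref{lem: Lambda is weighted space} already supplies the first prerequisite, namely that $(\Lambda_T^\alpha,\psi)$ is a weighted space. The remaining tasks are to verify that $\mathcal A$ (i) is a point-separating subalgebra of $C(\Lambda_T^\alpha)$ containing the constants, and (ii) is contained in $\cB_\psi(\Lambda_T^\alpha)$ through a pointwise domination by $\psi$.

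For (i), closure under multiplication is the shuffle identity $\langle e_I,\hbbX_t\rangle\,\langle e_J,\hbbX_t\rangle=\langle e_I\shuffle e_J,\hbbX_t\rangle$, which applies because $\hbbX_t\in G((\R^{d+1}))$ is group-like, and the constants arise from the empty index $I=\emptyset$. Continuity of each generator with respect to $d_{\Lambda,\alpha'}$ follows from continuity of Lyons' extension on each fiber $\widehat C_{d,t}^\alpha$ together with joint continuity in the stopping time, which by Remark~\ref{rem: quotient topology} transfers to the quotient through the continuous map $\phi$. For (ii), the standard factorial decay of signatures of geometric $\alpha$-H{\"o}lder rough paths yields a bound of the form $|\langle e_I,\hbbX_t\rangle|\lesssim \|\hbX^t_{[0,T]}\|_{cc,\alpha}^{|I|}\,T^{\alpha|I|}$, which is polynomial in $\|\hbX^t_{[0,T]}\|_{cc,\alpha}$ and is hence dominated by the exponential weight $\psi(\hbX_{[0,t]})=\exp(\beta\|\hbX^t_{[0,T]}\|_{cc,\alpha}^\gamma)$.

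The delicate step, and the one I expect to be the main obstacle, is point-separation on the quotient $\Lambda_T^\alpha$, since stopped paths with potentially different stopping times must be distinguished. Given $\hbX_{[0,t]}\neq \hbY_{[0,s]}$ in $\Lambda_T^\alpha$, I would split into two cases. If $t\neq s$, then the time coordinate separates them immediately via $\langle e_{(0)},\hbbX_t\rangle=t\neq s=\langle e_{(0)},\hbbY_s\rangle$. If $t=s$ but the underlying geometric rough paths on $[0,t]$ differ, then the tree-like uniqueness result of Hambly--Lyons and Boedihardjo et al.\ \cite{Hambly2010,Boedihardjo2016}, applied to the time-extended paths (strict monotonicity of the $0$-th coordinate rules out the tree-like obstruction), shows that $\hbbX_t\neq \hbbY_s$ in $T((\R^{d+1}))$, so some coordinate $\langle e_I,\cdot\rangle$ separates them. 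The remaining work is essentially careful bookkeeping, namely rephrasing every signature statement through the stopped representative $\hbX^t_{[0,T]}\in\widehat{C}_{d,T}^\alpha$, so that the machinery developed for Proposition~\ref{prop: weighted UAT 1} on the non-stopped space applies and then descends to $\Lambda_T^\alpha$ via $\phi$. Once (i) and (ii) are in place, \cite[Theorem~3.9]{Cuchiero2024} yields density of $\mathcal A$ in $\cB_\psi(\Lambda_T^\alpha)$, which is exactly the claim.
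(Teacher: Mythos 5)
Your overall architecture (weighted Stone--Weierstrass via \cite[Theorem~3.9]{Cuchiero2024}, Lemma~\ref{lem: Lambda is weighted space} for the weighted-space structure, shuffle identity for the algebra property, continuity via the quotient map $\phi$, and polynomial-versus-exponential domination for $\mathcal A\subseteq\cB_\psi(\Lambda_T^\alpha)$) matches the paper. However, there is a genuine gap in the point-separation step, which you correctly flag as the delicate part but then resolve in a way that does not satisfy the hypotheses of the theorem you invoke. The weighted Stone--Weierstrass theorem does not merely ask for \emph{some} point-separating subfamily of $\mathcal A$; it asks for a point-separating, nowhere vanishing vector subspace of \emph{$\psi$-moderate growth}, i.e.\ a family $\widetilde{\cA}$ such that $\exp(\lambda|\tilde a(\cdot)|)\in\cB_\psi(\Lambda_T^\alpha)$ for some $\lambda>0$. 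Your separation argument in the case $t=s$ appeals to signature uniqueness (Hambly--Lyons/Boedihardjo et al.) and concludes that ``some coordinate $\langle e_I,\cdot\rangle$ separates them''---but that coordinate may live at arbitrarily high level $|I|$. Since $|\langle e_I,\hbbX_t\rangle|$ grows like $\|\hbX^t_{[0,T]}\|_{cc,\alpha}^{|I|}$, for $|I|>\gamma$ the function $\exp(\lambda|\langle e_I,\cdot\rangle|)$ is \emph{not} dominated by $\psi=\exp(\beta\|\cdot\|_{cc,\alpha}^{\gamma})$ for any $\lambda>0$, so your separating family fails the moderate-growth requirement. Note also that restricting to levels $|I|\le\lfloor1/\alpha\rfloor$ does not rescue the argument: two distinct stopped rough paths are distinguished by the functions $s\mapsto\langle e_I,\hbbX_s\rangle$ on $[0,t]$ for low $|I|$, but their \emph{endpoint} values $\langle e_I,\hbbX_t\rangle$ at those low levels may well coincide.

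The paper closes exactly this gap by taking as separating family
\begin{equation*}
  \hbX_{[0,t]}\mapsto\langle (e_I\shuffle e_0^{\otimes k})\otimes e_0,\hbbX_t\rangle=\int_0^t\langle e_I,\hbbX_s\rangle\frac{s^k}{k!}\dd s,
  \qquad |I|\le\lfloor 1/\alpha\rfloor,\ k\in\N_0,
\end{equation*}
which records all polynomial moments of the low-level coordinate functions $s\mapsto\langle e_I,\hbbX_s\rangle$; vanishing of all these moments forces $\langle e_I,\hbbY_s\rangle=\langle e_I,\widehat{\mathbb Z}_s\rangle$ for all $s\in[0,t]$ and all $|I|\le\lfloor1/\alpha\rfloor$ by density of polynomials, hence equality of the rough paths---no appeal to full signature uniqueness is needed. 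Crucially, the extra time-integrations only contribute bounded powers of $T$, so these functionals still grow at most like $\|\hbX^t_{[0,T]}\|_{cc,\alpha}^{N}$ with $N\le\lfloor1/\alpha\rfloor\le\gamma$, and the moderate-growth condition holds for $\lambda$ small. To repair your proof you would need to replace your case-$t=s$ argument by this (or an equivalent low-level, moderate-growth) separating family.
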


\begin{proof}
  First note that, since $(\Lambda_T^\alpha,\psi)$ is a weighted space by Lemma~\ref{lem: Lambda is weighted space}, we are able to apply the weighted real-valued Stone--Weierstrass theorem, stated in \cite[Theorem~3.9]{Cuchiero2024}. The proof follows the same general strategy as the proof of \cite[Theorem~5.4]{Cuchiero2024}. Compared to the ordinary path spaces considered there, the main modification is that we work on the stopped rough path space $\Lambda_T^\alpha$, so the evaluation time is variable and continuity has to be checked with respect to the stopped rough path metric. Moreover, the auxiliary space $\widetilde{\cA}$ is adapted to the stopped  setting: since the time coordinate equals $t$ and may vanish at $t=0$, we include the constant coordinate to obtain a nowhere-vanishing element.
  
We apply the weighted real-valued Stone--Weierstrass theorem to
  \begin{equation*}
    \cA := \spn\Bigl\lbrace \hbX_{[0,t]} \mapsto \langle e_I, \hbbX_{t} \rangle : I \in \lbrace 0,\ldots,d \rbrace^N, \, N \in \N_0 \Bigr\rbrace.
  \end{equation*}
  To verify the assumptions of the weighted Stone--Weierstrass theorem, we first show that $\cA\subseteq \cB_\psi(\Lambda_T^\alpha)$ is a subalgebra. Moreover, by the definition of point separating and nowhere vanishing algebras of $\psi$-moderate growth, see  \cite[Definition~3.4]{Cuchiero2024}, it is sufficient to exhibit a vector subspace $\widetilde{\cA}\subseteq\cA$ which is point separating, nowhere vanishing, and of $\psi$-moderate growth. We choose
  \begin{align}\label{EqThmUATHoelderProof1}
    \widetilde{\cA}
    &:= \spn\Bigl(
      \Bigl\{
        \hbX_{[0,t]}
        \mapsto
        \langle e_\emptyset,\hbbX_t\rangle
      \Bigr\}\nonumber\\
      &\qquad \cup
      \Bigl\{
        \hbX_{[0,t]} \mapsto
        \langle ( e_I \shuffle e_0^{\otimes k} ) \otimes e_0,
        \hbbX_t \rangle :
        \begin{matrix}
          k \in \N_0, \,
          N \in \{0,\ldots,\lfloor 1/\alpha \rfloor \},\\
          I \in \{0,\ldots,d\}^N
        \end{matrix}
      \Bigr\}
    \Bigr)\\
    &\subseteq \cA\nonumber.
  \end{align}

  In order to prove that $\cA \subseteq \mathcal{B}_\psi(\Lambda_T^\alpha)$ is a vector subspace, we fix some $a \in \cA$ of the form $\Lambda_T^\alpha \ni \hbX_{[0,t]} \mapsto a(\hbX_{[0,t]}) := \langle e_I, \hbbX_t \rangle \in \R$, for some $I \in \lbrace 0,\ldots,d \rbrace^N$ and $N \in \N_0$.

  We define
  \begin{equation*}
   d_{\Lambda,\infty}(\hbX_{[0,t]},\hbY_{[0,s]}):= |t-s|+d_{cc,\infty} (\hbX^t_{[0,T]}, \hbY^s_{[0,T]}).
  \end{equation*}
  Note that by \cite[Lemma~5.2]{Cuchiero2024}, on each sublevel set $K_R:=\psi^{-1}((0,R])$, the metrics $d_{\Lambda,\infty}$ and $d_{\Lambda,\alpha'}$ induce the same topology. Hence, in order to verify continuity on $K_R$, it is enough to work with $d_{\Lambda,\infty}$.
  Therefore, recall that by Remark~\ref{rem: quotient topology} and Lemma~\ref{lem: topologies} the topology on $(\Lambda_T^\alpha,d_{\Lambda,\infty})$ coincides with the final topology induced by the map
  \begin{equation*}
    \phi\colon [0,T]\times \widehat{C}_{d,T}^\alpha\to\Lambda_T^\alpha,\qquad \phi(t,\hbX)=\hbX_{[0,t]},
  \end{equation*}
  where here we equip $\widehat{C}_{d,T}^\alpha$ with the metric $d_{cc,\infty}$. Then, a map $f\colon \Lambda_T^\alpha\to\R$ is continuous if and only if the composition $f\circ \phi\colon [0,T]\times \widehat{C}_{d,T}^{\alpha}\to\R$ is continuous. Thus, it suffices to prove continuity of $\bar{a}:=a\circ \phi$. Therefore, we fix some $R > 0$ and observe that the pre-image $K_R := \psi^{-1}((0,R])$ is bounded with respect to $d_{\Lambda,\alpha}$.

  For $(t,\hbX)\in [0,T]\times \widehat{C}_{d,T}^\alpha$, we have
  \begin{equation*}
    \bar{a}(t,\hbX)=a(\phi(t,\hbX))=a(\hbX_{[0,t]})=\langle e_I,\hbbX_t\rangle.
  \end{equation*}
  Now, let $\widetilde{K}_{R}\subset \widehat{C}_{d,T}^\alpha$ be a subset bounded with respect to the $\alpha$-H{\"o}lder norm $\|\,\cdot\,\|_{cc,\alpha}$. Then, it follows from \cite[Corollary~10.40]{Friz2010} that the map
  \begin{equation*}
    (\widetilde{K}_R,d_{cc,\infty}) \ni \hbX \quad \mapsto \quad \hbbX^N \in (C^{0,\alpha}([0,T];G^{N}(\R^{d+1})),d_{cc,\infty})
  \end{equation*}
  is continuous on $\widetilde{K}_R$ with respect to $d_{cc,\infty}$. This together with the continuity of the evaluation map
  \begin{equation*}
    (C^{0,\alpha}([0,T];G^N(\R^{d+1})),d_{cc,\infty}) \ni \hbbX^N \quad \mapsto \quad \hbbX^N_t \in (G^N(\R^{d+1}),d_{cc})
  \end{equation*}
  shows that the map
  \begin{equation*}
    (\widetilde{K}_R,d_{cc,\infty}) \ni \hbX \quad \mapsto \quad \hbbX^N_t \in (G^N(\mathbb{R}^{d+1}),d_{cc})
  \end{equation*}
  is continuous on $\widetilde{K}_R$ with respect to $d_{cc,\infty}$. Then, it also follows that
  \begin{equation*}
    ([0,T]\times \widetilde{K}_R,d_{\textup{prod}})\ni (t,\hbX)\quad\mapsto\quad \hbbX_t^N\in (G^N(\R^{d+1}),d_{cc}),
  \end{equation*}
  is continuous on $[0,T]\times \widetilde K_R$ with respect to the product metric $d_{\textup{prod}}:=|\,\cdot - \cdot\,|+d_{cc,\infty}$. Further, since linear functions on the finite dimensional space $G^N(\R^{d+1})$ are continuous, it follows that the map
  \begin{equation*}
    ([0,T]\times \widetilde K_R,d_{\textup{prod}}) \ni (t,\hbX) \quad \mapsto \quad \bar{a}(t,\hbX)= \langle e_I, \hbbX_t \rangle \in \R
  \end{equation*}
  is continuous on $[0,T]\times \widetilde K_R$ with respect to the product metric $d_{\textup{prod}}$. We now choose
  \begin{equation*}
    \widetilde{K}_R=\{\hbX^t_{[0,T]}\in\widehat C_{d,T}^\alpha:\psi(\hbX_{[0,t]})\le R\},
  \end{equation*}
  which, is bounded with respect to $\|\,\cdot\,\|_{cc,\alpha}$. Then, by construction
  \begin{equation*}
    K_R=\phi\Bigl([0,T]\times \widetilde{K}_R \Bigr),
  \end{equation*}
  and the topology on $K_R$ is the final topology induced by $\phi_R := \phi|_{[0,T]\times \widetilde K_R}$. Since $\bar a|_{[0,T]\times\widetilde K_R} = a|_{K_R} \circ \phi_R$, is continuous, we then obtain that the map
  \begin{equation}\label{eq: cts}
    (K_R,d_{\Lambda,\infty}) \ni \hbX_{[0,t]} \quad \mapsto \quad  a(\hbX_{[0,t]}) = \langle e_I, \hbbX_t \rangle \in \mathbb{R}
  \end{equation}
  is continuous on $K_R$ with respect to $d_{\Lambda,\infty}$. Since $R > 0$ was chosen arbitrarily, this shows that $a\vert_{K_R} \in C(K_R)$, for all $R > 0$.

  Moreover, using the ball-box-estimate (see \cite[Proposition~7.49]{Friz2010}), we have
  \begin{equation*}
    \Vert g-h \Vert_{T^N(\R^{d+1})} \leq C_1 \max\left( d_{cc}(g,h) \max\Bigl(1, \Vert g \Vert_{cc}^{N-1} \right), d_{cc}(g,h)^N\Bigr)
  \end{equation*}
  for each $g,h \in G^N(\R^{d+1})$ and some constant $C_1 \geq 1$ and by choosing $g = \hbbX^N_0$ and $h = \hbbX^N_t$ we obtain for every $\hbX_{[0,t]} \in \Lambda_T^\alpha$ that
  \begin{equation*}
    \vert a(\hbX_{[0,t]}) \vert = \vert \langle e_I, \hbbX_t \rangle \vert \leq \Vert \hbbX^N_t \Vert_{T^N(\R^{d+1})} \leq \Vert \hbbX^N_t - \hbbX^N_0 \Vert_{T^N(\R^{d+1})} + 1 \leq C_1\Bigl( d_{cc}(\hbbX^N_t, \hbbX^N_0)^N + 2 \Bigr).
  \end{equation*}
  Using the inequality $d_{cc}(\hbbX^N_u,\hbbX^N_s) \leq C_{N,\alpha} d_{cc}((\hbX^t)_u, (\hbX^t)_s)$ for all $\hbX_{[0,t]} \in \Lambda_T^\alpha$ and some constant $C_{N,\alpha} > 0$ (see \cite[Theorem~9.5]{Friz2010} for the $p$-variation case, which carries over to the $\alpha$-H{\"o}lder setting by \cite[p.~182]{Friz2010}), we further obtain
  \begin{align}\label{EqThmUATHoelderProof3}
    \vert a(\hbX_{[0,t]}) \vert
    &\leq C_1 \Bigl( d_{cc}(\hbbX^N_t, \widehat{\mathbb{X}}^N_0)^N+2 \Bigr) \leq C_1 \Bigl( T^{\alpha N} \Bigl(\sup_{u,s \in [0,T],\, u < s} \frac{d_{cc}(\hbbX^N_u, \hbbX^N_s)}{\vert s-u \vert^\alpha}\Bigr)^N + 2 \Bigr) \nonumber\\
    &\leq C_1 \Bigl( C^N_{N,\alpha} T^{\alpha N}\Bigl( \sup_{u,s \in [0,T],\, u < s} \frac{d_{cc}((\hbX^t)_u, (\hbX^t)_s)}{\vert s-u \vert^\alpha}\Bigr)^N + 2 \Bigr)\nonumber\\
    &= C_1 \Bigl( C^N_{N,\alpha} T^{\alpha N} \Vert \hbX_{[0,T]}^t \Vert_{cc,\alpha}^N + 2 \Bigr).
  \end{align}

  Thus, we conclude that,
  \begin{equation*}
    \lim_{R \rightarrow \infty} \sup_{\hbX_{[0,t]}\in \Lambda_T^\alpha \setminus K_R} \frac{\vert a(\hbX_{[0,t]}) \vert}{\psi(\hbX_{[0,t]})} \leq C_1 \lim_{R \rightarrow \infty} \sup_{\hbX_{[0,t]} \in \Lambda_T^\alpha \setminus K_R} \frac{C^N_{N,\alpha} T^{\alpha N} \Vert \hbX_{[0,T]}^t \Vert^N_{cc,\alpha} + 2}{\exp\Bigl( \beta \Vert \hbX_{[0,T]}^t \Vert_{cc,\alpha}^{\gamma} \Bigr)} = 0,
  \end{equation*}
  since the exponential function dominates any polynomial. It follows from Lemma~\cite[Lemma~2.7]{Cuchiero2024} that $a \in \cB_\psi(\Lambda_T^\alpha)$, which shows that $\cA \subseteq \cB_\psi(\Lambda_T^\alpha)$.

  Moreover, we observe that $\cA$ is by the shuffle property a subalgebra of $\cB_\psi(\Lambda_T^\alpha)$. In order to show that $\cA$ is point separating and nowhere vanishing of $\psi$-moderate growth, we claim that the vector subspace $\widetilde{\cA} \subseteq \cA$ defined in \eqref{EqThmUATHoelderProof1} is point separating, nowhere vanishing, and for every $\tilde{a} \in \widetilde{\cA}$ there exists some $\lambda > 0$ such that $\exp( \lambda \vert \tilde{a}(\cdot) \vert) \in \mathcal{B}_\psi(\Lambda_T^\alpha)$.

  For the former, let $\hbY_{[0,t]}, \widehat{\mathbf{Z}}_{[0,s]} \in \Lambda_T^\alpha$ be distinct. If $t\neq s$, the two components are separated by the time component, i.e.~$\langle e_0,\hbbY_t\rangle\neq \langle e_0,\widehat{\mathbb{Z}}_s\rangle$. Thus, assume from now on that $t=s$. By contradiction, let us assume that for every $k \in \N_0$, $N \in \lbrace 0,\ldots,\lfloor 1/\alpha \rfloor \rbrace$, and $I \in \lbrace 0,\ldots,d \rbrace^N$ it holds that
  \begin{equation*}
    \langle ( e_I \shuffle e_0^{\otimes k} ) \otimes e_0, \hbbY_t \rangle = \langle ( e_I \shuffle e_0^{\otimes k} ) \otimes e_0, \widehat{\mathbb{Z}}_t \rangle,
  \end{equation*}
  where we observe, using the shuffle property, that
  \begin{equation}\label{eq:formAtilde}
    \langle ( e_I \shuffle e_0^{\otimes k} ) \otimes e_0, \hbbX_t \rangle = \int_0^t \langle e_I \shuffle e_0^{\otimes k}, \hbbX_r \rangle \dd r = \int_0^t \langle e_I, \hbbX_r \rangle \langle e_0^{\otimes k}, \hbbX_r \rangle \dd r = \int_0^t \langle e_I, \hbbX_r \rangle \frac{r^k}{k!} \dd r,
  \end{equation}
  for all $\hbX_{[0,t]} \in \Lambda_T^\alpha$. Thus, we conclude for every $k \in \N_0$, $N \in \lbrace 0,\ldots,\lfloor 1/\alpha \rfloor \rbrace$, and $I \in \lbrace 0,\ldots,d \rbrace^N$ that
  \begin{equation*}
    \int_0^t \langle e_I, \hbbY_r - \widehat{\mathbb{Z}}_r \rangle \frac{r^k}{k!} \dd r = 0.
  \end{equation*}
  By \cite[Corollary~4.24]{Brezis2011}, we then deduce that
  \begin{equation*}
    \langle e_I, \hbbY_r \rangle = \langle e_I, \widehat{\mathbb Z}_r \rangle,
  \end{equation*}
  for all $r \in [0,t]$ and all $I \in \{0, \ldots,d\}^N$, $N \in \{0,1,\ldots,\lfloor 1/\alpha\rfloor\}$. This contradicts our assumption that $\widehat{\mathbf Y}_{[0,t]}$ and $\widehat{\mathbf Z}_{[0,s]}$ are distinct, and shows that $\widetilde{\mathcal A}$ is point separating.

  Further, we observe that $\widetilde{\cA}$ vanishes nowhere. Indeed, by using the map
  \begin{equation*}
    ( \hbX_{[0,t]} \mapsto \tilde{a}(\hbX_{[0,t]}) := \langle e_\emptyset, \hbbX_t \rangle +\langle (e_\emptyset\shuffle e_0^{\otimes 0})\otimes e_0,\hbbX_t\rangle ) \in \widetilde{\cA},
  \end{equation*}
  we observe that $\tilde{a}(\hbX_{[0,t]}) = 1+\int_0^t \dd s=1+t\neq 0$, for all $\hbX_{[0,t]} \in \Lambda_T^\alpha$.

  Now, to show that for every $\tilde a\in\widetilde\cA$ there exists some $\lambda>0$ such that $\exp(\lambda|\tilde a(\cdot)|)\in\cB_\psi(\Lambda_T^\alpha)$ we fix some $( \hbX_{[0,t]} \mapsto \tilde{a}(\hbX_{[0,t]}) = l(\hbbX_t) ) \in \widetilde{\cA}$ with linear function
  \begin{equation*}
    l(\hbbX_t) = a_\emptyset \langle e_\emptyset,\hbbX_t\rangle + \sum_{0 \leq \vert I \vert \leq N} \sum_{k=0}^K a_{I,k} \langle ( e_I \shuffle e_0^{\otimes k} ) \otimes e_0, \widehat{\mathbb{X}}_t \rangle,
  \end{equation*}
  for some $K \in \N_0$ and $N \in \lbrace 0,\ldots,\lfloor 1/\alpha \rfloor \rbrace$ and $a_{I,k},a_\emptyset \in \R$. Then, by similar arguments as for \eqref{eq: cts}, we have $\exp( \vert \lambda \widetilde{a}(\cdot) \vert )\vert_{K_R} \in C(K_R)$, for all $\lambda, R > 0$. In addition, by the same reasoning as in \eqref{EqThmUATHoelderProof3}, together with the explicit form of the elements of $\widetilde{\cA}$ in \eqref{eq:formAtilde}, we deduce for all $\hbX_{[0,t]} \in \Lambda_T^\alpha$ that
  \begin{equation*}
    \begin{aligned}
      \vert \tilde{a}(\hbX_{[0,t]}) \vert & = \vert l(\hbbX_t) \vert \leq C_1 \Vert l \Vert_{T^{N+K+1}(\R^{d+1})^*} \Bigl( T^{\alpha (K+1) N} \sup_{u,s \in [0,T], \, u < s} \Bigl(\frac{d_{cc}(\hbbX^{N}_u,\hbbX^{N}_s)}{\vert s-u \vert^\alpha}\Bigr)^N +1\Bigr) \\
      & \leq C_1 \Vert l \Vert_{T^{N+K+1}(\R^{d+1})^*}\Bigl( C^N_{N,\alpha}  T^{\alpha (K+1) N} \Bigl(\sup_{u,s \in [0,T], \, u < s} \frac{d_{cc}((\hbX^t)_u,(\hbX^t)_s)}{\vert s-u \vert^\alpha}\Bigr)^N +1 \Bigr) \\
      & = C_1 \Vert l \Vert_{T^{N+K+1}(\R^{d+1})^*} \Bigl( C^N_{N,\alpha}  T^{\alpha (K+1)N} \Vert \hbX^t_{[0,T]} \Vert_{cc,\alpha}^N +1 \Bigr).
    \end{aligned}
  \end{equation*}
  Then, for $C_2 := \max(C_1 \Vert l \Vert_{T^{N+K+1}(\R^{d+1})^*} C^N_{N,\alpha} T^{\alpha(K+1)N}, C_1 \Vert l \Vert_{T^{N+K+1}(\R^{d+1})^*}) > 0$, we have
  \begin{equation*}
    \lim_{R \rightarrow \infty} \sup_{\hbX_{[0,t]} \in \Lambda_T^\alpha \setminus K_R} \frac{\exp( \lambda \vert \tilde{a}(\hbX_{[0,t]}) \vert)}{\psi(\hbX_{[0,t]})} \leq \lim_{R \rightarrow \infty} \sup_{\hbX_{[0,t]} \in \Lambda_T^\alpha \setminus K_R} \frac{\exp( \lambda C_2 (\| \hbX_{[0,T]}^t \|_{cc,\alpha}^N +1) )}{\exp( \beta \Vert \hbX_{[0,T]}^t \Vert_{cc,\alpha}^\gamma )} = 0,
  \end{equation*}
  where the last equality follows by choosing $\lambda < \beta/C_2$ small enough ensuring that the denominator tends faster to infinity than the numerator (as $\gamma \geq \lfloor 1/\alpha \rfloor \geq N$). Hence, by \cite[Lemma~2.7]{Cuchiero2024} it follows that $\exp( \lambda \vert \tilde{a}(\cdot) \vert) \in \cB_\psi(\Lambda_T^\alpha)$ which holds true for any $\tilde{a} \in \widetilde{\cA}$.
  
  Hence, we can apply the weighted real-valued Stone--Weierstrass theorem to conclude that $\cA$ is dense in $\cB_\psi(\Lambda_T^\alpha)$.
\end{proof}

\begin{remark}
  A related universal approximation result on weighted spaces is established in \cite[Theorem~2.20]{Cuchiero2024b}. There, the authors work with stopped Stratonovich-enhanced continuous semimartingales and their signatures. This corresponds to a specific subclass of stopped geometric rough paths, naturally lying in the rough path regularity range $\alpha\in(\frac{1}{3},\frac{1}{2})$. In contrast, Proposition~3.11 is formulated on the general stopped geometric $\alpha$-H\"older rough path space $\Lambda_T^\alpha$, for arbitrary $\alpha\in(0,1)$. Thus, while \cite[Theorem~2.20]{Cuchiero2024b} is tailored to the semimartingale setting, Proposition~3.11 provides an abstract rough path version which is not restricted to rough paths arising from semimartingales, e.g., it is applicable to Gaussian processes, like the fractional Brownian motion, see Section~\ref{sec: gaussian process}.
\end{remark}

We are now in a position to formulate a global universal approximation theorem in a suitable $L^p(\Lambda_T^\alpha)$-space. For this purpose, we work on the space $(\Lambda_T^\alpha,\cB(\Lambda_T^\alpha))$ equipped with a finite Borel measure $\nu$, where $\cB(\Lambda_T^\alpha)$ denotes the Borel $\sigma$-algebra on $\Lambda_T^\alpha$.

\begin{theorem}[$L^p$-universal approximation theorem on $\Lambda_T^\alpha$]\label{thm:Lpmain 2}
  Let $\psi$ be defined as in \eqref{eq:weight function non-anticipative}, $p\ge 1$, and $\int_{\Lambda_T^\alpha}\psi^p\dd\nu<\infty$. Moreover, consider the set
  \begin{equation*}
    \mathcal L_\Lambda:=\Bigl\{f_\ell|~f_\ell\colon\hbX_{[0,t]}\mapsto\boldsymbol\ell(\hbbX_t)=\sum_{|I|\le N}\ell_I\langle e_I,\hbbX_t\rangle,\ell_I\in\R,\,N\in\N_0,\,\hbX_{[0,t]}\in\Lambda_T^\alpha\Bigr\}.
  \end{equation*}
  Then, for every $f\in L^p(\Lambda_T^\alpha)$  and for every $\epsilon>0$ there exists a functional $f_\ell\in\mathcal L_\Lambda$ such that
  \begin{equation*}
    \|f-f_\ell\|_{L^p(\Lambda_T^\alpha)}<\epsilon.
  \end{equation*}
\end{theorem}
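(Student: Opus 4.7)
The plan is to follow verbatim the three-step architecture of the proof of Theorem~\ref{thm:Lpmain}, transporting each step to the stopped rough path space $\Lambda_T^\alpha$. All the technical prerequisites are already in place: by Lemma~\ref{lem: Lambda is weighted space}, $(\Lambda_T^\alpha, \psi)$ is a weighted space; by Proposition~\ref{prop: weighted UAT 2}, linear functionals on the signature are dense in $\cB_\psi(\Lambda_T^\alpha)$; and by Remark~\ref{rem: quotient topology}, $\Lambda_T^\alpha$ is a Polish metric space, which is exactly the regularity required for both Lusin's theorem (on a finite Borel measure space) and Tietze's extension theorem (on a normal space) to apply.

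Fix $f \in L^p(\Lambda_T^\alpha, \nu)$ and $\epsilon > 0$. First, I would truncate via $f_K := f \cdot \mathbf{1}_{\{|f| \le K\}}$ and use dominated convergence to pick $K^\epsilon$ with $\|f - f_{K^\epsilon}\|_{L^p(\Lambda_T^\alpha)} \le \epsilon/3$. Next, applying Lusin's theorem to the bounded measurable function $f_{K^\epsilon}$ on the Polish space $(\Lambda_T^\alpha, d_{\Lambda,\alpha'})$ with the finite Borel measure $\nu$ yields a closed set $C^\epsilon \subset \Lambda_T^\alpha$ on which $f_{K^\epsilon}$ is continuous and with $\nu(\Lambda_T^\alpha \setminus C^\epsilon) \le \epsilon^p / (6 K^\epsilon)^p$. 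Tietze's extension theorem then provides a continuous extension $f^\epsilon \in C_b(\Lambda_T^\alpha;[-K^\epsilon, K^\epsilon])$; since $f_{K^\epsilon}$ and $f^\epsilon$ agree on $C^\epsilon$ and are both bounded by $K^\epsilon$, the integral of their difference is controlled on the complement, giving $\|f_{K^\epsilon} - f^\epsilon\|_{L^p(\Lambda_T^\alpha)} \le \epsilon/3$.

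For the final step, I would exploit the continuous embedding $C_b(\Lambda_T^\alpha) \hookrightarrow \cB_\psi(\Lambda_T^\alpha)$ and invoke Proposition~\ref{prop: weighted UAT 2} on $f^\epsilon$. Setting $M := \int_{\Lambda_T^\alpha} \psi^p \,\dd\nu < \infty$, this furnishes a linear functional $f_\ell \in \mathcal L_\Lambda$ with $\|f^\epsilon - f_\ell\|_{\cB_\psi(\Lambda_T^\alpha)}^p < \epsilon^p/(3^p M)$. The pointwise bound $|f^\epsilon(\hbX_{[0,t]}) - \boldsymbol\ell(\hbbX_t)| \le \psi(\hbX_{[0,t]}) \, \|f^\epsilon - f_\ell\|_{\cB_\psi(\Lambda_T^\alpha)}$ integrates to $\|f^\epsilon - f_\ell\|_{L^p(\Lambda_T^\alpha)}^p \le M \|f^\epsilon - f_\ell\|_{\cB_\psi(\Lambda_T^\alpha)}^p < (\epsilon/3)^p$, and a triangle inequality combining the three estimates yields $\|f - f_\ell\|_{L^p(\Lambda_T^\alpha)} < \epsilon$.

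I do not anticipate a genuine obstacle, as the scheme of Theorem~\ref{thm:Lpmain} transfers essentially verbatim once the underlying weighted-space and Polish-space properties of $\Lambda_T^\alpha$ have been established in Lemma~\ref{lem: Lambda is weighted space} and Remark~\ref{rem: quotient topology}. The only subtle point worth verifying carefully is the applicability of Lusin's theorem in the quotient-topology setting; this is standard on any Polish space with a finite Borel measure, but it is the one place where the particular construction of $\Lambda_T^\alpha$ as a quotient of $[0,T] \times \widehat C_{d,T}^\alpha$ implicitly enters the argument.
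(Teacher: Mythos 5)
Your proposal is correct and follows essentially the same route as the paper's proof: both reduce to the case of a bounded continuous function via truncation, Lusin's theorem, and Tietze's extension theorem (justified by $\Lambda_T^\alpha$ being Polish), and then invoke Proposition~\ref{prop: weighted UAT 2} together with the integrability of $\psi^p$ to pass from the $\cB_\psi$-estimate to the $L^p$-estimate. The only cosmetic difference is that the paper compresses the first two steps into a single $\epsilon/2$ bound rather than your three $\epsilon/3$ bounds.
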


\begin{proof}
  We apply Lusin's theorem and Tietze's extension theorem verbatim as in Theorem~\ref{thm:Lpmain}, and we obtain that for every $f\in L^p(\Lambda_T^\alpha,\nu)$ and every $\epsilon>0$, there exist $K^\epsilon>0$ and a bounded continuous function $f^\varepsilon\in C_b(\Lambda_T^\alpha;[-K^\epsilon,K^\epsilon])$ with $\|f-f^\epsilon\|_{L^p(\Lambda_T^\alpha)}<\frac{\epsilon}{2}$.

  By definition $C_b(\Lambda_T^\alpha)\subseteq\mathcal B_\psi(\Lambda_T^\alpha)$ and, using Proposition~\ref{prop: weighted UAT 2}, we can approximate $f^\epsilon$ in $\mathcal B_\psi(\Lambda_T^\alpha)$ by a linear function on the signature, i.e.
  \begin{equation*}
    \|{f}^\epsilon-f_\ell\|^p_{\cB_\psi(\Lambda_T^\alpha)}=\Bigl(\sup_{\hbX_{[0,t]}\in\Lambda_T^\alpha}\frac{|{f}^\epsilon(\hbX_{[0,t]})-\boldsymbol{\ell}(\hbbX_t)|}{\psi(\hbX_{[0,t]})}
    \Bigr)^p< \frac{\epsilon^p}{2^p M},
  \end{equation*}
  where $M:=\int_{\Lambda_T^\alpha}\psi^p\dd\nu<\infty$. As in Proposition~\ref{prop: weighted UAT 2}, this yields an $L^p$-approximation of $f$ by such linear combinations, that is,
  \begin{equation*}
    \|{f}^\epsilon-f_\ell\|^p_{L^p(\Lambda_T^\alpha)}\le\int_{\Lambda_T^\alpha}\psi^p\dd\nu~\|{f}^\epsilon-f_\ell\|^p_{\cB_\psi(\Lambda_T^\alpha)}< \Bigl(\frac{\epsilon}{2} \Bigr)^p,
  \end{equation*}
  which proves the claim.
\end{proof}

\begin{remark}\label{rem: UAT for robust signatures}
  In contrast to the classical signature employed in Theorem~\ref{thm:Lpmain 2}, the $L^p$-universal approximation theorems in \cite{Schell2023} and \cite{Bayer2025} are established using so-called robust signatures, which were introduced in \cite{Chevyrev2022} as a normalized variant of the classical signature. Moreover, the approaches developed in \cite{Schell2023} and \cite{Bayer2025} differ substantially from the proof of Theorem~\ref{thm:Lpmain 2}.

  More specifically, \cite{Schell2023} exploits that linear functionals of the bounded signature form a rich algebra of measurable functions that generates the $\sigma$-algebra of the underlying (subsets of the) classical path space; a monotone class argument then yields $L^2$-density of linear signature functionals among all square-integrable measurable random variables. By contrast, \cite{Bayer2025} reduces the approximation of general $L^p$-functionals to that of bounded continuous ones and combines suitable weight functions --- used to control the tail behavior of the underlying measure on the rough path space --- with a Stone--Weierstrass theorem for robust signatures.
\end{remark}

\section{Approximation properties of linear functionals on Gaussian signatures}\label{sec: gaussian process}

In this section, we demonstrate that the $L^p$-universal approximation theorems (Theorem~\ref{thm:Lpmain} and Theorem~\ref{thm:Lpmain 2}) apply to a class of (time-extended) Gaussian rough paths whose covariance functions satisfy suitable variation estimates. This allows us to approximate fairly general stochastic processes by linear combinations of the random signatures of time-extended Gaussian processes. In particular, we show that the framework applies to fractional Brownian motion with Hurst parameter $H\in(\frac{1}{3},\frac{1}{2}]$ and, thus, especially to the standard Brownian motion as the special case $H=\frac12$.

The deterministic assumptions needed for these theorems, such as the weighted-space property and the density of linear signature functionals on the corresponding rough path spaces, have been verified in Section~\ref{sec: global approx}. Hence, for the law of the Gaussian rough path, it remains to verify the integrability condition required in Theorem~\ref{thm:Lpmain} and Theorem~\ref{thm:Lpmain 2}, namely the corresponding exponential moment condition for the law of a Gaussian rough path. Establishing this condition constitutes the main step of the proofs below. For related approximation result for stochastic processes using the robust signature, we refer to \cite{Schell2023,Bayer2025}.

\medskip

Throughout the present section, let $X=(X_t)_{t\in [0,T]}$ be a $d$-dimensional continuous, centred Gaussian process with independent components defined on a probability space $(\Omega, \cF, \P)$, with a filtration $(\cF_t)_{t \in [0,T]}$ satisfying the usual conditions, i.e., completeness and right-continuity. For an introduction to stochastic processes and stochastic calculus, we refer, e.g., to the classical textbook \cite{Karatzas1988}.

Let us recall a sufficient condition under which a Gaussian process admits a geometric rough path lift, see \cite[Theorem~10.4]{Friz2020}. To that end, we denote by
\begin{align*}
  R\colon [0,T]^2 &\to\R^{d\times d},\\
  (s,t) &\mapsto \E[X_{s}\otimes X_{t}],
\end{align*}
the covariance function of $X$.  For the $i$-th component of $X$, we write $R_{X^i}(s,t):=\E[X_s^iX_t^i]$, $0\le s,t\le T.$ Assume that there exist $\varrho\in[1,2)$ and $M<\infty$ such that, for every $i\in\{1,\dots,d\}$ and all $0\leq s\leq t\leq T$,
\begin{equation*} 
  \|R_{X^i}\|_{\varrho;[s,t]^2}\le M |t-s|^{1/\varrho},
\end{equation*}
where $\|\,\cdot\,\|_{\varrho\textup{-var};I\times I^\prime}$ denotes the two-dimensional $\varrho$-variation on a rectangle $I\times I^\prime$ as defined in \cite[(10.5)]{Friz2020}. Then, for $1\le i<j\leq d$ and $0\le s\le t\le T$, the second-level iterated integrals are defined in the $L^2$-sense by
\begin{equation*}
  \X^{(2),i,j}_{s,t}:=\lim_{|\mathcal P|\to 0}\int_{\mathcal P} (X^i_r-X^i_s)\dd X^j_r:=\lim_{|\mathcal P|\to 0}\sum_{[u,v]\in\mathcal P}
  (X_u^i-X_s^i)X^j_{u,v},
\end{equation*}
where $\mathcal P$ is a finite partition of $[s,t]$ and $|\mathcal P|$ denotes its mesh size. The remaining components are determined by the algebraic relations
\begin{equation*}
  \X^{(2),i,i}_{s,t}:=\frac{1}{2} (X^i_{s,t})^2,
\end{equation*}
and, for $1\le i<j\le d$,
\begin{equation*}
  \X^{(2),j,i}_{s,t}:=-\X^{(2),i,j}_{s,t}+X^i_{s,t}X^j_{s,t}.
\end{equation*}
Then, for $\varrho\in[1,\frac{3}{2})$ and any $\alpha\in(\frac{1}{3},\frac{1}{2\varrho})$ the Gaussian rough path $\bX=(1,X,\X^{(2)})$ belongs to $C^{0,\alpha}([0,T];G^2(\R^d))$ almost surely and, thus, $X$ admits a geometric $\alpha$-H\"older rough path lift. As usual, we denote by $\hbX$ the geometric rough path of the time-extended Gaussian process $\hX=(\cdot,X)$ and $\hbbX$ is associated signature, which by definition of the signature of a geometric rough path, corresponds to the unique Lyons' lift of $\hbX$. We call $\hbbX$ the time-extended Gaussian signature.
 
Furthermore, we introduce the filtration $\cF_t^{\bX}:=\sigma(\{\bX_{s}: s\le t\},\mathcal{N})$ for $t\in [0,T]$ and $\mathcal{N}$ containing all $\P$-null sets, i.e., the natural augmented filtration generated by $\bX$. We denote by $\mathcal H^p$ the space of $(\mathcal F^{\bX}_t)$-progressively measurable processes $A$ such that
\begin{equation*}
  \|A\|_{\mathcal H^p}^p:=\E\Bigl[\int_0^T|A_t|^p\dd t\Bigr]<\infty.
\end{equation*}
  In addition, we define the $\alpha$-H{\"o}lder rough path norm
  \begin{equation*}
    \ver\hbX_\alpha:=\|\widehat X\|_\alpha+\sqrt{\|\hbbX^{(2)}\|_{2\alpha}}=\sup_{0\le s<t\le T}\frac{|\widehat X_{s,t}|}{|t-s|^\alpha}+\sqrt{\sup_{0\le s<t\le T}\frac{|\hbbX^{(2)}_{s,t}|}{|t-s|^{2\alpha}}},
  \end{equation*}
  for $\hbX\in C_0^\alpha([0,T];G^2(\R^{d+1}))$ and $\alpha\in(\frac{1}{3},\frac{1}{2\varrho})$. Note that this norm is equivalent to the norm $\|\hbX\|_{cc,\alpha}$ on $G^2(\R^{d+1})$ (with constant $C>0$), see \cite[p.22]{Friz2020}.
In Section~\ref{sec: global approx} we introduced the notion of a stopped rough path in general, see Definition~\ref{def: stopped rough path}. We now specialise this construction to the time-extended Gaussian rough path and present an explicit description of its coordinates.

\begin{example}
  By Definition~\ref{def: stopped rough path} the stopped Gaussian rough path $\hbX^t_{[0,T]}$ is given by $(\hbX^t)_s:=\hbX_s$ for all $s\in[0,t]$ and for all $r\in[t,T]$ we have
  \begin{equation*}
    \langle e_I,(\hbX^t)_r\rangle=
    \begin{cases}
    r,&  \text{for } I=(0)\\[10pt]
    \frac{1}{2} r^{2},& \text{for } I=(0,0)\\[10pt]
    \langle e_I, \hbX_t \rangle,& \text{for } I=(i)\text{ or } I=(j,i), i\in\{1,\ldots,d\},\\
    & j\in\{0,\ldots,d\}\\[10pt]
    r \cdot \langle e_{i}, \hbX_t \rangle
    - \langle e_{(0,i)}, \hbX_t \rangle,& \text{for } I=(i,0), i\in\{1,\ldots,d\},
    \end{cases}
  \end{equation*}
  where the last line follows by
  \begin{align*}
    \langle e_{(i,0)},(\hbX^t)_r\rangle&=\int_0^r\langle e_i,\hbX^t_s\rangle\dd s\\
    &=\int_0^t \langle e_i,\hbX^t_s\rangle\dd s+\int_t^r\langle e_i,\hbX_t\rangle\dd s\\
    &=\langle e_{(i,0)},\hbX_t\rangle+(r-t)\langle e_i,\hbX_t\rangle\\
    &=\langle e_{(i,0)},\hbX_t\rangle+r\langle e_i,\hbX_t\rangle-\langle e_0,\hbX_t\rangle\langle e_i,\hbX_t\rangle\\
    &=\langle e_{(i,0)},\hbX_t\rangle+r\langle e_i,\hbX_t\rangle-\langle e_0\shuffle e_i,\hbX_t\rangle\\
    &=\langle e_{(i,0)},\hbX_t\rangle+r\langle e_i,\hbX_t\rangle-\langle e_{(0,i)},\hbX_t\rangle-\langle e_{(i,0)},\hbX_t\rangle\\
    &=r\langle e_i,\hbX_t\rangle-\langle e_{(0,i)},\hbX_t\rangle.
  \end{align*}
\end{example}

\subsection{Universal approximation with Gaussian signatures}

In this subsection, we establish that any functional $f(\hbX) \in L^p(\Omega,\P)$, as well as any stochastic process $f(\hbX_{[0,\cdot]}) \in \mathcal H^p$, can be approximated by linear functionals acting on the (time-extended) signature of Gaussian processes.

\begin{proposition}\label{prop: approximation Gaussian signature}
  Let $X$ be a $d$-dimensional centred Gaussian process with independent components and covariance $R$ such that there exists $\varrho\in[1,\frac{3}{2})$ and $M<\infty$ such that for every $i\in\{1,\ldots,d\}$ and $0\le s\le t\le T$,
  \begin{equation*}
    \|R_{X^i}\|_{\varrho\textup{-var};[s,t]^2}\le M|t-s|^{1/\varrho}.
  \end{equation*}  
  Let $\alpha \in (\frac{1}{3},\frac{1}{2\varrho})$ and let $\hX=(\cdot,X)$ be the time-extended Gaussian process and $\hbX$ the corresponding time-extended Gaussian rough path.
  \begin{enumerate}
    \item[(i)] Let $f(\hbX)\in L^p(\Omega;\P)$ with $f\colon \widehat{C}_{d,T}^{\alpha} \to \R$. Then, for every $\epsilon>0$ there exists a linear function $\boldsymbol\ell \colon T((\R^{d+1}))\to \R$ of the form $\hbbX_T\mapsto\boldsymbol\ell(\hbbX_T):=\sum_{|I|\le N}\ell_I\langle e_I,\hbbX_T\rangle$, for some $N\in\N_0$ and $\ell_I\in\R$, such that
    \begin{equation*}
      \E[|f(\hbX)-\boldsymbol\ell(\hbbX_T)|^p]<\epsilon.
    \end{equation*}
    \item[(ii)] Let $f(\hbX_{[0,\cdot]})\in\mathcal H^p$ with $f\colon \Lambda_T^{\alpha}\to \R$. Then, for every $\epsilon>0$ there exists a linear function $\boldsymbol\ell\colon T((\R^{d+1}))\to \R$ of the form $\hbbX_t\mapsto\boldsymbol\ell(\hbbX_t):=\sum_{|I|\le N}\ell_I\langle e_I,\hbbX_t\rangle$, for some $N\in\N_0$ and $\ell_I\in\R$, such that
    \begin{equation*}
      \E\Bigl[\int_0^T|f(\hbX_{[0,t]})-\boldsymbol\ell(\hbbX_t)|^p\dd t\Bigr]<\epsilon.
    \end{equation*}
  \end{enumerate}
\end{proposition}

\begin{proof}
  \emph{(i):} As discussed in \cite[Appendix~A.1]{Friz2010}, the Gaussian rough path $\hbX$ can be seen as a $C^{0,\alpha}([0,T];G^{2}(\R^{d+1}))$-valued random variable and its law $\mu_{\hbX}$ is a Borel probability measure on $C^{0,\alpha}([0,T];G^2(\R^{d+1}))$. Thus, when working on the space $\widehat{C}_{d,T}^\alpha$ of time-extended geometric rough paths, we take $\nu := \mu_{\hbX}$. Then, we observe that since $f(\hbX)\in L^p(\Omega;\P)$, we have that
  \begin{align*}
    \int_{\widehat{C}_{d,T}^\alpha}|f|^p\dd\mu_\hbX=\E[|f(\hbX)|^p]<\infty,
  \end{align*}
  that is, $f\in L^p(\widehat{C}_{d,T}^\alpha;\mu_\hbX)$.

In order to apply Theorem~\ref{thm:Lpmain}, we have to verify that the time-extended Gaussian rough path $\hbX$ satisfies the exponential moment condition given by $\int_{\widehat{C}_{d,T}^\alpha}\psi^p\dd\nu<\infty$, with $\psi(\hbX)=\exp(\beta p \|\hbX\|_{cc,\alpha}^\gamma)$ for $\gamma\ge \lfloor 1/\alpha\rfloor$, $\beta>0$, and $\alpha\in(\frac{1}{2},\frac{1}{2\varrho})$.

 The Fernique theorem for Gaussian rough paths \cite[Theorem~11.9]{Friz2020} yields that for any $\alpha\in(\frac{1}{3},\frac{1}{2\varrho})$ there exists $\eta=\eta(M,T,\alpha,\varrho)>0$ with
  \begin{equation*}
    \E\Bigl[\exp\Bigl(\eta\ver{\hbX}_\alpha^2\Bigr)\Bigr]<\infty.
  \end{equation*}
Hence, for $\gamma=2$, using the equivalence of the norm $\|\,\cdot\,\|_{cc,\alpha}$ and the norm $\ver{\,\cdot\,}_\alpha$ on $G^2$, we obtain
 \begin{equation*}
    \int_{\widehat{C}_{d,T}^\alpha}\psi^p\dd\mu_{\hbX}=\E\Bigl[\exp\Bigl(\beta p\| \hbX\|_{cc,\alpha}^2\Bigr)\Bigr]\le \E\Bigl[\exp\Bigl(\beta p\, C^2\ver \hbX_\alpha^2\Bigr)\Bigr]<\infty,
 \end{equation*}
 whenever
  \begin{equation*}
    0<\beta\le \frac{\eta}{pC^2}.
  \end{equation*}
 With this choice of $\beta$, the integrability condition required in Theorem~\ref{thm:Lpmain} is satisfied. Therefore, Theorem~\ref{thm:Lpmain} yields that for every $\epsilon>0$ there exists a functional $f_\ell\in\mathcal L$ such that
  \begin{equation*}
    \|f-f_\ell\|^p_{L^p(\widehat{C}_{d,T}^\alpha)}<\epsilon.
  \end{equation*}
  In particular, this implies that, for every $\epsilon>0$ there exists a linear function $\boldsymbol\ell$ on the Gaussian signature, such that
  \begin{equation*}
    \E[|f(\hbX)-\boldsymbol\ell(\hbbX_T)|^p]=\int_{\widehat{C}_{d,T}^\alpha}|f(\hbX)-f_\ell(\hbX)|^p\dd\mu_\hbX=\|f-f_\ell\|^p_{L^p(\widehat{C}_{d,T}^\alpha)}<\epsilon.
  \end{equation*}

  \emph{(ii):} On the space $(\Lambda_T^\alpha,\cB(\Lambda_T^\alpha))$, we let $\nu$ be the push-forward measure of $\dd t\otimes \dd\mu_{\hbX}$ under the surjective map
  \begin{equation*}
    \phi\colon [0,T]\times \widehat{C}_{d,T}^\alpha\to \Lambda_T^\alpha,\quad (t,\hbX)\mapsto \hbX_{[0,t]},
  \end{equation*}
  that is, $\nu := (\dd t \otimes \dd\mu_{\hbX})\circ \phi^{-1}$.

  We first show that $f(\hbX_{[0,\cdot]})\in L^p(\Lambda_T^\alpha)$. By a change of measure result, we have
  \begin{align*}
    \|f\|^p_{L^p(\Lambda_T^\alpha)}&=\int_{\Lambda_T^\alpha}|f|^p\dd\nu\\
    &=\int_{\widehat C_{d,T}^\alpha}\int_0^T|(f\circ \phi)(t,\hbX)|^p\dd t\dd\mu_{\hbX}\\
    &=\E\Bigl[\int_0^T|f(\hbX_{[0,t]})|^p\dd t\Bigr]<\infty,
  \end{align*}
  since $f(\hbX_{[0,\cdot]})\in \mathcal H^p$ by assumption. Next, we verify the exponential moment condition as required in Theorem~\ref{thm:Lpmain 2}. By a change of measure result, we get
  \begin{align*}
    \int_{\Lambda_T^\alpha}\psi^p\dd\nu&=\int_{\widehat{C}_{d,T}^\alpha}\int_0^T((\psi\circ\phi)(t,\hbX))^p\dd t\dd\mu_{\hbX}\\
    &=\E\Bigl[\int_0^T\psi(\hbX_{[0,t]})^p\dd t\Bigr]\\
    &=\E\Bigl[\int_0^T\exp\Bigl(\beta p\| \hbX^t_{[0,T]}\|_{cc,\alpha}^\gamma\Bigr)\dd t\Bigr]\\
    &\le T\E\Bigl[\sup_{t\in[0,T]}\exp\Bigl(\beta p\| \hbX^t_{[0,T]}\|_{cc,\alpha}^\gamma\Bigr)\Bigr]\\
    &=T\E\Bigl[\exp\Bigl(\beta p\| \hbX_{[0,T]}\|_{cc,\alpha}^\gamma\Bigr)\Bigr]\\
    & \le T \E\Bigl[\exp\Bigl(\beta p\, C^\gamma\ver \hbX_\alpha^\gamma\Bigr)\Bigr]<\infty,
  \end{align*}
  for $\gamma=2$ and $\beta\in(0,\frac{\eta}{C^\gamma p}]$, where we used that
  \begin{equation*}
    \sup_{t\in[0,T]}\|\hbX^t_{[0,T]}\|_{cc,\alpha}=\|\hbX_{[0,T]}\|_{cc,\alpha}.
  \end{equation*}

  Therefore, by Theorem~\ref{thm:Lpmain 2} for every $\epsilon>0$ there exists a functional $f_\ell\in\mathcal L_\Lambda$ such that
  \begin{equation*}
    \|f-f_\ell\|^p_{L^p(\Lambda_T^\alpha)}<\epsilon.
  \end{equation*}
  Consequently, for every $\epsilon>0$ there exists a linear function $\boldsymbol\ell$ on the Gaussian signature, such that
  \begin{align*}
    \E\Bigl[\int_0^T |f(\hbX_{[0,t]})-\boldsymbol\ell(\hbbX_t)|^p\dd t\Bigr]&=\int_{\widehat C_{d,T}^\alpha}\int_0^T|(f\circ \phi-f_\ell\circ\phi)(t,\hbX)|^p\dd t\dd\mu_{\hbX}\\
    &=\int_{\Lambda_T^\alpha}|f-f_\ell|^p\dd\nu\\
    &=\|f-f_\ell\|^p_{L^p(\Lambda_T^\alpha)}<\epsilon,
  \end{align*}
  where again we used a change of measure result. This concludes the proof.
\end{proof}
\subsubsection{Application to fractional Brownian motion and affine transformations}

We now discuss some important examples to which the preceding Proposition~\ref{prop: approximation Gaussian signature} applies.

\begin{corollary}\label{cor: fBM}
  Let $W^H$ be a $d$-dimensional fractional Brownian motion with Hurst index $H\in(\frac{1}{3},\frac{1}{2}]$ and let $\alpha\in(\frac13,H)$. Then, there exists $\eta=\eta(T,\alpha,H)$ such that
  \begin{equation*}
    \E[\exp(\eta\ver{\hbW^H}_\alpha^2)]<\infty.
  \end{equation*}
\end{corollary}

\begin{proof}
  Let $H\in(\frac{1}{3},\frac{1}{2}]$. Then the covariance of the $d$-dimensional fractional Brownian motion $W^H$ has finite $\varrho$-variation with $\varrho=\frac{1}{2H}\in[1,\frac{3}{2})$, see \cite[Theorem~10.9 and Example~10.11]{Friz2020}. Hence, by \cite[Theorem~11.9]{Friz2020} there exists $\eta=\eta(T,\alpha,H)$ such that
  \begin{equation*}
    \E\Bigl[\exp\Bigl(\eta\ver{\hbW^H}_\alpha^2\Bigr)\Bigr]<\infty.
  \end{equation*}
\end{proof}

Consequently, the $L^p$-universal approximation results of Theorems~\ref{thm:Lpmain} and Theorem~\ref{thm:Lpmain 2} apply to the law of $\hbW^H$, with $\gamma=2$ and $\beta>0$ chosen sufficiently small. In particular, for $H=\frac{1}{2}$ this results corresponds to stanadard Brownian motion.

\begin{corollary}
  Let $X$ be a $d$-dimensional centred Gaussian process with independent components and covariance $R$ such that there exists $\varrho\in[1,\frac{3}{2})$ and $M<\infty$ such that for every $i\in\{1,\ldots,d\}$ and $0\le s\le t\le T$,
  \begin{equation*}
    \|R_{X^i}\|_{\varrho\textup{-var};[s,t]^2}\le M|t-s|^{1/\varrho}.
  \end{equation*}
  Let $\alpha\in(\frac{1}{3},\frac{1}{2\varrho})$. Set $\hX_t=(t,X_t)\in\R^{d+1}$, $t\in[0,T]$ and let $\hbX$ denote its geometric rough path lift. Furthermore, let $Z$ be an affine transformation of $X$, i.e.
  \begin{equation*}
    Z_t=x_0+bt+\Sigma X_t,\qquad t\in[0,T],
  \end{equation*}
  where $x_0\in\R^d$, $b\in\R^d$, and $\Sigma\in\R^{d\times d}$. Set $\widehat{Z}_t:=(t,Z_t)\in\R^{d+1}$ and let $\widehat{\mathbf Z}$ be its  geometric rough path lift. Then, for every $p\ge 1$ there exists $\beta>0$ (depending on $p,\alpha,\varrho,T,b,\Sigma$) such that
  \begin{equation*}
    \E\Bigl[\exp\Bigl(\beta p\,\|\widehat{\mathbf Z}\|_{cc,\alpha}^2\Bigr)\Bigr]<\infty.
  \end{equation*}
\end{corollary}

\begin{proof}
  Define the linear map $A\colon\R^{d+1}\to\R^{d+1}$ by
  \begin{equation*}
    A(u,v):=(u,\,bu+\Sigma v),\qquad u\in\R,\ v\in\R^d.
  \end{equation*}
  Then, $\widehat Z_t=(0,x_0)+A\widehat{X}_t$ and, thus, $\widehat Z_{s,t}=A\widehat{X}_{s,t}$ for all $0\le s<t\le T$. The map $A$ induces a map $\Phi_A\colon G^2(\R^{d+1})\to G^2(\R^{d+1})$ given by
  \begin{equation*}
    \Phi_A(1,x^{(1)},x^{(2)})=\bigl(1,Ax^{(1)},A^{\otimes2}x^{(2)}\bigr),
  \end{equation*}
  where $A^{\otimes2}$ is defined by $A^{\otimes2}(u\otimes v)=(Au)\otimes(Av)$. Let $\widehat{\mathbf Z}=(1,\widehat Z,\widehat{\mathbb Z}^{(2)})$ and $\hbX=(1,\widehat{X},\hbbX^{(2)})$ denote the geometric rough path lifts of $\widehat Z$ and $\hX$, respectively, then
  \begin{equation*}
    \widehat{\mathbf Z}_{s,t}=\Phi_A(\hbX_{s,t}),\qquad 0\le s<t\le T,
  \end{equation*}
  i.e.,
  \begin{equation*}
    \widehat Z_{s,t}=A\hX_{s,t},\qquad \widehat{\mathbb Z}_{s,t}^{(2)}=A^{\otimes 2} \hbbX^{(2)}_{s,t}.
  \end{equation*}
  More precisely, if we identify $(\R^{k})^{\otimes 2}$ with $\R^{k\times k}$, then
  \begin{equation*}
    \widehat{\mathbb Z}_{s,t}^{(2)}=A \hbbX^{(2)}_{s,t} A^\top,
  \end{equation*}
  where $A^\top$ is the transpose of $A$. By the equivalence of $\|\,\cdot\,\|_{cc,\alpha}$ and the norm $\ver{\,\cdot\,}_\alpha$ on $G^2$, there exists a constant $C\ge 0$ such that
  \begin{align*}
    \|\widehat{\mathbf Z}\|_{cc,\alpha}&\le C \ver{\widehat{\mathbf Z}}_\alpha\\
    &=C(\|\widehat Z\|_\alpha+\sqrt{\|\widehat{\mathbb Z}^{(2)}\|_{2\alpha}})\\
    &=C(\|A\hX\|_\alpha+\sqrt{\|A^{\otimes 2}\hbbX^{(2)}\|_{2\alpha}})\\
    &\le C(\|A\|\|\hX\|_\alpha+\|A^{\otimes 2}\|^{1/2}\sqrt{\|\hbbX^{(2)}\|_{2\alpha}})\\
    &=C\|A\| \ver{\hbX}_\alpha,
  \end{align*}
  where $\|A\|$ denotes the operator norm induced by the Euclidean norm and $\|A^{\otimes2}\|^{1/2}$ denotes the operator norm of the linear map $A^{\otimes2}\colon(\R^{d+1})^{\otimes2}\to(\R^{d+1})^{\otimes2}$, which satisfy $\|A^{\otimes2}\|^{1/2}=\|A\|$.

  Finally, by Fernique's theorem (\cite[Theorem~11.9]{Friz2020}) there exists $\eta>0$ such that
  \begin{equation*}
    \E\Big[\exp\big(\eta\,\ver{\hbX}_\alpha^2\big)\Big]<\infty.
  \end{equation*}
  For $\beta\in (0,\eta/(C^2\|A\|^2p)]$, we then have
  \begin{equation*}
    \E\Big[\exp\big(\beta p\,\|\widehat{\mathbf Z}\|_{cc,\alpha}^2\big)\Big]\le \E\Big[\exp\big(\eta\,\ver{\hbX}_\alpha^2\big)\Big] <\infty,
  \end{equation*}
  which proves the claim.
\end{proof}

\subsection{Approximation of stochastic differential equations}

In this subsection, we show that solutions to stochastic differential equations (SDEs) driven by Brownian motions can be approximated by linear combinations of time-extended Brownian signatures.

\medskip

Therefore, throughout the present section, let $W=(W_t)_{t\in [0,T]}$ be a $d$-dimensional Brownian motion, defined on a probability space $(\Omega, \cF, \P)$, with a filtration $(\cF_t)_{t \in [0,T]}$ satisfying the usual conditions, i.e., completeness and right-continuity. 

\medskip

Recall that, for a Brownian motion~$W$, there is a canonical choice for a random geometric rough path lift $\bW$ of $W$ given by
\begin{equation*}
  \bW_t:=\bigg(1,W_t,\int_0^tW_s\otimes\circ \dd W_s\bigg),\quad t\in [0,T],
\end{equation*}
where the stochastic integral $\int_0^t W_s\otimes\circ \dd W_s$ is defined as a classical Stratonovich integral. Note that $\bW_t$ takes values in $ G^2(\R^{d})$ for all $t\in[0,T]$, see e.g. \cite[Exercise~13.10]{Friz2010}, and the Stratonovich-enhanced Brownian rough path  $\bW$ is, almost surely, a geometric $\alpha$-H{\"o}lder rough path for $\alpha\in(\frac{1}{3},\frac{1}{2})$. In the following, we denote the time-extended Stratonovich-enhanced Brownian rough path by $\hbW$ and $\hbbW$ its associated signature, which, by definition of the signature of a geometric rough path, corresponds to the unique Lyons' lift of $\hbW$ and coincides with iterated Stratonovich integrals, see \cite[Exercise~17.2]{Friz2010}. We call $\hbW$ and $\hbbW$ the (time-extended) Brownian rough path and the (time-extended) Brownian signature, respectively.

\begin{remark}
  Note that $\mathcal F_t^\bW=\sigma(\{\bW_s: s\le t\},\mathcal N)=\sigma(\{W_s: s\le t\},\mathcal N)=:\mathcal F_t^W$ for $t\in[0,T]$, that is, the natural augmented filtration generated by $\bW$ and by $W$ coincide, see e.g.~the proof of \cite[Proposition~13.11]{Friz2010}.
\end{remark}
\begin{proposition}\label{prop: SDE signature approx}
  Let $1\le p<\infty$. Consider the stochastic differential equation
  \begin{equation}\label{eq: sde}
    Y_t = y_0 + \int_0^t \mu(s,Y_s) \dd s + \int_0^t \sigma(s,Y_s) \dd W_s, \quad t \in [0,T],
  \end{equation}
  where $y_0 \in \R^m$, $\mu\colon [0,T]\times \R^m \to \R^m$ and $\sigma\colon [0,T] \times \R^m \to \R^{m\times d}$ are continuous functions, and $\int_0^t \sigma(s,Y_s) \dd W_s$ is defined as an It{\^o} integral. Suppose there exists a unique (strong) solution $Y$ to the SDE~\eqref{eq: sde} and that $\mu, \sigma$ satisfy the linear growth condition
  \begin{equation*}
    |\mu(t,x)|+|\sigma(t,x)|\leq C (1 + |x|),\quad x\in\R^m,
  \end{equation*}
  for some constant $C>0$.
 
  Then, for every $\epsilon>0$ there exists a linear function $\boldsymbol\ell\colon T((\R^{d+1}))\to \R^m$ of the form $\hbbW_t\mapsto\boldsymbol\ell(\hbbW_t):=\sum_{|I|\le N}\ell_I\langle e_I,\hbbW_t\rangle$, for some $N\in\N_0$ and $\ell_I\in\R^m$, such that
  \begin{equation*}
    \E\Bigl[\int_0^T|Y_t-\boldsymbol\ell(\hbbW_t)|^p\dd t\Bigr]<\epsilon.
  \end{equation*}
\end{proposition}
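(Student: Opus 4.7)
The strategy is to reduce the claim to Corollary~\ref{cor: approximation Brownian signature}(ii). For this, I need to exhibit a measurable non-anticipative functional $f\colon \Lambda_T^{\alpha}\to\R$ (for some $\alpha\in(1/3,1/2)$) such that $Y_t = f(\hbW_{[0,t]})$ almost surely for every $t\in[0,T]$, and then verify that the resulting process belongs to $\mathcal H^p$. Once both are established, Corollary~\ref{cor: approximation Brownian signature}(ii) directly furnishes the desired linear functional on the Brownian signature.

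The first step is the standard $L^p$-moment estimate for strong solutions of SDEs under linear growth. Using the BDG inequality on the stochastic integral together with Jensen's inequality on the drift term, one obtains an inequality of the form
\begin{equation*}
  \E\Bigl[\sup_{s\le t}|Y_s|^p\Bigr] \le C_{p,T}\Bigl(1+|y_0|^p+\int_0^t\E\Bigl[\sup_{r\le s}|Y_r|^p\Bigr]\dd s\Bigr),
\end{equation*}
valid for $p\ge 2$ thanks to the assumption $|\mu(t,x)|+|\sigma(t,x)|\le C(1+|x|)$; Gronwall's lemma then yields $\E[\sup_{t\le T}|Y_t|^p]<\infty$ and hence $\E[\int_0^T|Y_t|^p\dd t]<\infty$, which is the integrability required for $\mathcal H^p$.

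The second and main step is the non-anticipative representation of $Y$ as a functional of the stopped Brownian rough path. Since $Y$ is a strong solution, it is adapted to the augmented Brownian filtration $(\cF_t^W)_{t\in[0,T]}=(\cF_t^{\bW})_{t\in[0,T]}$, and its continuous paths make it progressively measurable. The plan is to invoke the functional representation of progressively measurable processes (Galmarino's test, see e.g.\ \cite[Chapter~IV]{Karatzas1988}), which provides a Borel-measurable map $f\colon\Lambda_T^{\alpha}\to\R$ satisfying $Y_t=f(\hbW_{[0,t]})$ almost surely for every $t\in[0,T]$. The mild subtlety here is that this factorization is initially available with respect to the unaugmented filtration generated by $W$; since the natural filtrations of $W$ and $\bW$ coincide up to $\P$-null sets and $\Lambda_T^{\alpha}$ is Polish (Remark~\ref{rem: quotient topology}), one can choose a version of $Y$ admitting such a functional representation in terms of $\hbW_{[0,\cdot]}$. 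I expect this measurable-selection step to be the only delicate point of the argument; once it is in place, the integrability verified above yields $f(\hbW_{[0,\cdot]})\in\mathcal H^p$.

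Finally, applying Corollary~\ref{cor: approximation Brownian signature}(ii) to this $f$ gives, for every $\epsilon>0$, a linear functional $\boldsymbol\ell$ on the truncated signature such that
\begin{equation*}
  \E\Bigl[\int_0^T|Y_t-\boldsymbol\ell(\hbbW_t)|^p\dd t\Bigr]
  =\E\Bigl[\int_0^T|f(\hbW_{[0,t]})-\boldsymbol\ell(\hbbW_t)|^p\dd t\Bigr]<\epsilon,
\end{equation*}
which completes the proof.
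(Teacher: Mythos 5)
Your overall strategy is sound in outline, but it is not the route the paper takes, and the one step you yourself flag as ``the only delicate point'' is precisely where the argument is incomplete. The paper proves Proposition~\ref{prop: SDE signature approx} by a different, more constructive path: it first mollifies $\mu,\sigma$ into compactly supported smooth coefficients $\mu^\epsilon,\sigma^\epsilon$ (controlling $\E[\sup_t|Y_t-Y^\epsilon_t|^p]$ via stability of SDEs), rewrites the approximating SDE in Stratonovich form driven by the time-extended Brownian motion, and invokes the well-posedness of the associated rough differential equation to obtain an explicit solution map $\Phi\colon\Lambda_T^\alpha\to\R^m$ with $\Phi(\hbW_{[0,t]})=Y^\epsilon_t$. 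Measurability of $\Phi$ is then automatic from rough path theory, $\Phi\in L^p(\Lambda_T^\alpha)$ follows from the moment bound, and Theorem~\ref{thm:Lpmain 2} applies; a final triangle inequality combines the two errors. Your route instead goes through Corollary~\ref{cor: approximation Brownian signature}(ii) applied to a functional representation $Y_t=f(\hbW_{[0,t]})$.

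The gap is exactly the existence of that Borel-measurable $f\colon\Lambda_T^\alpha\to\R$. The classical functional representation (Doob--Dynkin / Galmarino-type arguments, cf.\ \cite[Chapter~5.3.D]{Karatzas1988}) produces a progressively measurable functional on the \emph{classical} stopped path space of $W$, with respect to the canonical (unaugmented) filtration and the uniform topology. To apply Corollary~\ref{cor: approximation Brownian signature}(ii) you need $f$ to be Borel on $(\Lambda_T^\alpha,d_{\Lambda,\alpha'})$, i.e.\ measurable with respect to the $\sigma$-algebra generated by the rough path metric on the space of \emph{stopped rough paths}; this requires relating the progressive $\sigma$-algebra of $W$ to the Borel structure of $\Lambda_T^\alpha$ through the (measurable but not pathwise canonical) lift $\omega\mapsto\hbW(\omega)$, and handling the null-set augmentation. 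Your sentence ``one can choose a version of $Y$ admitting such a functional representation in terms of $\hbW_{[0,\cdot]}$'' asserts the conclusion rather than proving it. The authors explicitly acknowledge this alternative proof in a remark following the proposition and state that they avoid it for precisely this reason, pointing to \cite[Section~4.2]{Bank2025} for the required measurability analysis. So either carry out that analysis in detail, or switch to the paper's RDE-based construction, which sidesteps the issue by producing $\Phi$ directly as a rough path solution map.
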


\begin{proof}
  We first clarify the three objects used in the proof. The process $Y$ denotes the unique strong solution of the original It\^o SDE~\eqref{eq: sde}. We then approximate the coefficients by smooth coefficients and denote by $Y^\epsilon$ the solution of the corresponding smooth It{\^o} SDE, driven by the same Brownian motion. Finally, after rewriting this smooth equation in Stratonovich form, $\Phi$ denotes the deterministic It{\^o}--Lyons solution map on the stopped rough path space.

  \textit{Step~1.} It is well-known that SDEs with coefficients satisfying a linear growth condition admit solutions that are uniformly bounded in $L^p(\Omega,\P)$, i.e.,
  \begin{equation*}
    \E\Bigl[\sup_{t\in[0,T]} |Y_t|^p\Bigr] < \infty,
  \end{equation*}
  see, for instance, the argument in \cite[Theorem~4.5.3]{Kloeden1992}.

  We next approximate the coefficients by smooth coefficients.  Following a similar construction as in the proof of \cite[Proposition~1.1]{Hofmanova2021}, we can find smooth functions $\mu^n$ and $\sigma^n$ with compact support such that the following two properties hold. First, there exists a constant $C_0>0$, independent of $n$, such that
  \begin{equation*}
    |\mu^n(t,y)|+|\sigma^n(t,y)| \leq C_0(1+|y|), \qquad (t,y)\in[0,T]\times\R^m .
  \end{equation*}
  Second, the approximating coefficients converge locally uniformly to $\mu$ and $\sigma$, namely for every $R>0$,
  \begin{equation}\label{eq: convergence coeff}
    \sup_{(t,y)\in[0,T]\times B_R}
    |\mu^n(t,y)-\mu(t,y)|+\sup_{(t,y)\in[0,T]\times B_R} |\sigma^n(t,y)-\sigma(t,y)| \to 0
  \end{equation}
  as $n\to\infty$, where $B_R:=\{y\in\R^m: |y|\le R\}.$ For each $n\in\N$, consider the approximating SDE
  \begin{equation*}
    Y_t^n=y_0 +\int_0^t \mu^n(s,Y_s^n)\dd s +\int_0^t \sigma^n(s,Y_s^n)\dd W_s, \qquad t\in[0,T].
  \end{equation*}

  Since $\mu^n$ and $\sigma^n$ are smooth with compact support, they are globally Lipschitz and bounded. Hence, for every $n$, the process $(Y_t^n)_{t\in [0,T]}$ admits a unique strong solution. By \cite[Theorem~A]{Kaneko1988}, together with \eqref{eq: convergence coeff} and pathwise uniqueness of the limiting equation \eqref{eq: sde}, we obtain
  \begin{equation*}
    \E[\sup_{t\in[0,T]} |Y_t^n-Y_t|^2] \to 0.
  \end{equation*}
  In particular, for $p=1$, we have
  \begin{equation*}
  \E[\sup_{t\in[0,T]}|Y_t^n-Y_t|]\le \E[\sup_{t\in[0,T]}|Y_t^n-Y_t|^2]^{\frac{1}{2}}\to 0.
  \end{equation*}
  Now, let $p>2$ be fixed. Choose $q>p$. Since $y_0$ is deterministic and the coefficients $\mu^n,\sigma^n$ satisfy the uniform linear growth bound, the usual Burkholder--Davis--Gundy and Gr{\"o}nwall estimates yield
  \begin{equation*}
    \sup_{n\ge1} \E\Bigl[ \sup_{t\in[0,T]} |Y_t^n|^q \Bigr] <\infty,\qquad \sup_{n\ge 1}\E[\sup_{t\in[0,T]}|Y_t^n-Y_t|^q]<\infty.
  \end{equation*}
  We interpolate between $L^2$ and $L^q$. For $\theta\in(0,1)$, let
  \begin{equation*}
    \frac{1}{p}=\frac{\theta}{2}+\frac{1-\theta}{q}.
  \end{equation*}
  Then, we obtain
  \begin{equation*}
    \|Y^n-Y\|_{L^p}\le \|Y^n-Y\|_{L^2}^\theta\|Y^n-Y\|_{L^q}^{1-\theta}.
  \end{equation*}
  Since $\|Y^n-Y\|_{L^2}\to 0$ and $\sup_{n\ge 1}\|Y^n-Y\|_{L^q}<\infty$, we have
  \begin{equation*}
    \E\Bigl[\sup_{t\in[0,T]}|Y_t^n-Y_t|^p\Bigr]\to 0.
  \end{equation*}
  Consequently, for the given $\epsilon>0$, we may choose $n_\epsilon\in\N$ sufficiently large such that
  \begin{equation*}
    \E\Bigl[ \sup_{t\in[0,T]} |Y_t^{n_\epsilon}-Y_t|^p \Bigr] \leq \frac{\epsilon}{2^pT}.
  \end{equation*}
  We now set
  \begin{equation*}
    Y^\epsilon:=Y^{n_\epsilon}, \qquad \mu^\epsilon:=\mu^{n_\epsilon}, \qquad \sigma^\epsilon:=\sigma^{n_\epsilon}.
  \end{equation*}
  Then, $Y^\epsilon$ solves the smooth SDE
  \begin{equation*}
    Y_t^\epsilon= y_0+\int_0^t \mu^{\epsilon}(s,Y_s^{\epsilon})\dd s +\int_0^t \sigma^{\epsilon}(s,Y_s^{\epsilon})\dd W_s, \qquad t\in[0,T].
  \end{equation*}
  Hence,
  \begin{equation*}
    \E\Bigl[\sup_{t\in[0,T]} |Y_t^\epsilon-Y_t|^p \Bigr]\leq \frac{\epsilon}{2^pT}.
  \end{equation*}
  
  Using the uniform $L^p$-boundedness of $Y$, we deduce
  \begin{equation}\label{eq: smooth SDE bounded}
    \E\Bigl[\sup_{t\in[0,T]} |Y_t^\varepsilon|^p\Bigr]
      \le 2^{p-1}\Bigl( \E\Bigl[\sup_{t\in[0,T]} |Y_t^\varepsilon - Y_t|^p\Bigr]+ \E\Bigl[\sup_{t\in[0,T]} |Y_t|^p\Bigr]\Bigr)
      < \infty.
  \end{equation}

  \textit{Step~2.} We next rewrite $Y_t^\epsilon$ as the solution of a Stratonovich SDE. Using the usual It\^o--Stratonovich correction, we can write
  \begin{align*}
    \dd Y_t^\epsilon&=\mu^\epsilon(t,Y_t^\epsilon)\dd t+ \sigma^\epsilon(t,Y_t^\epsilon)\dd W_t\\
    &=(\mu^\epsilon(t,Y_t^\epsilon) -\frac{1}{2}\sigma^\epsilon(t,Y_t^\epsilon)\frac{\partial \sigma^\epsilon}{\partial y}(t,Y_t^\epsilon))\dd t+\sigma^\epsilon(t,Y_t^\epsilon)\circ\dd W_t\\
    &=\tilde\mu^\epsilon(t,Y_t^\epsilon)\dd t+\sigma^\epsilon(t,Y_t^\epsilon)\circ\dd W_t,
  \end{align*}
  where $\circ$ denotes Stratonovich integration and $\tilde\mu^\epsilon$ is a modification of $\mu^\epsilon$ by the additional drift term. Introducing the time-extended Brownian motion $\widehat W_t = (t,W_t)$, we may rewrite the SDE in the compact Stratonovich form
  \begin{equation}\label{eq: smooth Strato SDE}
    \d Y^\epsilon_t=\widehat{\sigma}^\epsilon(t,Y^\epsilon_t)\circ\dd\widehat{W}_t,
  \end{equation}
  where $\widehat{\sigma}^\epsilon$ now also contains the drift term $\tilde{\mu}^\epsilon$, i.e., $\widehat{\sigma}^\epsilon\colon [0,T]\times\R^m\to\R^{m\times(d+1)}$ with
  \begin{equation*}
    \widehat{\sigma}^\epsilon=
    \begin{pmatrix}
      \tilde{\mu}^\epsilon_1 & \sigma^\epsilon_{11}& \cdots & \sigma^\epsilon_{1d} \\
      \tilde{\mu}^\epsilon_2 & \sigma^\epsilon_{21} & \cdots & \sigma^\epsilon_{2d} \\
      \vdots  & \vdots  & \ddots & \vdots  \\
      \tilde{\mu}^\epsilon_{m} & \sigma^\epsilon_{m1} & \cdots & \sigma^\epsilon_{md}
    \end{pmatrix}.
  \end{equation*}
  By construction we have $\widehat\sigma^\epsilon\in C_b^3([0,T]\times \R^m;\mathcal L(\R^{d+1},\R^m))$. Hence, by \cite[Theorem~8.3]{Friz2020}, the associated rough differential equation (RDE), given by
  \begin{equation}\label{eq: RDE}
    \d Y_t^\epsilon=\widehat\sigma^\epsilon(t,Y_t^\epsilon)\dd\hbW_t,
  \end{equation}
  driven by the time-extended Brownian rough path $\hbW$, is well-posed and admits a unique global solution.

  Moreover, by \cite[Theorem~9.1]{Friz2020}, \eqref{eq: smooth Strato SDE} can be solved pathwise almost surely as a RDE solution $(Y_t^\epsilon(\omega),\widehat\sigma^\epsilon(t,Y_t^\epsilon(\omega)))\in\mathcal D^{2\alpha}_{W(\omega)}$ of \eqref{eq: RDE}.
    
  \textit{Step~3.} Let $\Phi\colon \Lambda_T^\alpha \to \R^m$ denote the solution map to \eqref{eq: RDE},  i.e.~$\Phi(\hbW_{[0,t]}) = Y_t^\epsilon.$ Then,
  \begin{align*}
    \int_{\Lambda_T^\alpha}|\Phi|^p\dd\nu&=\int_{\widehat C_{d,T}^\alpha}\int_0^T|(\Phi\circ\phi)(t,\hbW)|^p\dd t\dd\mu_{\hbW}\\
    &=\E\Bigl[\int_0^T|\Phi(\hbW_{[0,t]})|^p\dd t\Bigr]\\
    &=\E\Bigl[\int_0^T|Y_t^\epsilon|^p\dd t\Bigr]\\
    &\le T \E\Bigl[\sup_{t\in[0,T]}|Y_t^\epsilon|^p\Bigr]<\infty,
  \end{align*}
  where we used a change of measure result and \eqref{eq: smooth SDE bounded}. Thus, $\Phi\in L^p(\Lambda_T^\alpha;\R^m)$ and we may apply Theorem~\ref{thm:Lpmain 2} componentwise, since the integrability condition is in particular satisfied by the time-extended Brownian rough path, see Corollary~\ref{cor: fBM}. The extension from the scalar-valued to the $\R^m$-valued case is immediate, so we omit the details. Therefore, for every $\epsilon>0$ there exists a functional $f_\ell\in\mathcal L_\Lambda$, such that
  \begin{equation*}
    \|\Phi-f_\ell\|^p_{L^p(\Lambda_T^\alpha)}<\frac{\epsilon}{2^p}.
  \end{equation*}
  This yields that there exists a linear function $\boldsymbol\ell$ on the Brownian signature, such that
  \begin{align*}
    \E\Bigl[\int_0^T|Y_t^\epsilon-\boldsymbol\ell(\hbbW_t)|^p\dd t\Bigr]&=\E\Bigl[\int_0^T|\Phi(\hbW_{[0,t]})-f_\ell(\hbW_{[0,t]})|^p\dd t\Bigr]\\
    &=\int_{\widehat C_{d,T}^\alpha}\int_0^T|(\Phi\circ\phi-f_\ell\circ\phi)(t,\hbW)|^p\dd t\dd\mu_{\hbW}\\
    &=\int_{\Lambda_T^\alpha}|\Phi-f_\ell|^p\dd\nu\\
    &=\|\Phi-f_\ell\|^p_{L^p(\Lambda_T^\alpha)}<\frac{\epsilon}{2^p},
  \end{align*}
  where we used a change of measure result. Finally, combining steps $1$-$3$ and using the triangle inequality, we obtain
  \begin{align*}
    &\E\Bigl[\int_0^T|Y_t-\boldsymbol\ell(\hbbW_t)|^p\dd t\Bigr]\\
    &\le 2^{p-1}\Bigl(\E\Bigl[\int_0^T|Y_t-Y_t^\epsilon|^p\dd t\Bigr]+\E\Bigl[\int_0^T|Y_t^{\epsilon}-\boldsymbol\ell(\hbbW_t)|^p\dd t\Bigr]\Bigr)\\
    &\le 2^{p-1}\Bigl( T\E \Bigl[\sup_{t\in[0,T]}|Y_t-Y_t^\epsilon|^p\Bigr]+\E\Bigl[\int_0^T|Y_t^{\epsilon}-\boldsymbol\ell(\hbbW_t)|^p\dd t\Bigr]\Bigr)\\
    &<T\frac{\epsilon}{2T}+\frac \epsilon 2\\
    &<\epsilon,
  \end{align*}
  which yields the desired result.
\end{proof}

\begin{remark}
  Proposition~\ref{prop: SDE signature approx} can alternatively be proved by a direct application of Proposition~\ref{prop: approximation Gaussian signature}~\emph{(ii)}. Indeed, on the canonical Wiener space, any $(\mathcal F_t^W)$-progressively measurable process $Y\in\mathcal H^p$ (in particular, strong solutions of It{\^o} SDEs under standard assumptions on the coefficients) can be written in the form
  \begin{equation*}
    Y_t = f\big(\widehat W_{[0,t]}),\quad t\in [0,T],
  \end{equation*}
  for some non-anticipative functional $f$, where $\widehat W_t = (t,W_t)$ denotes the time-extended Brownian motion, cf. \cite[Chapter~5.3.D]{Karatzas1988}. If $\hbW$ is the time-extended Stratonovich-enhanced Brownian rough path and $\pi_1$ its first-level projection, then $\widehat W_{[0,t]} = \pi_1(\hbW_{[0,t]})$, and thus
  \begin{equation*}
    Y_t = f(\widehat W_{[0,t]})
    = f(\pi_1(\hbW_{[0,t]}))
    =: \Phi(\hbW_{[0,t]}).
  \end{equation*}
  Hence, $Y$ fits into the setting of Proposition~\ref{prop: approximation Gaussian signature}~\emph{(ii)}, which then yields an $\mathcal H^p$-approximation of $Y$ by linear functionals on the time-extended Brownian signature.

  We note, however, that making the representation $Y_t = f(\widehat W_{[0,t]})$ fully rigorous requires a careful measurability analysis for progressively measurable processes with respect to the topology induced by the rough path distance used on $\Lambda_T^\alpha$; cf.~\cite[Section~4.2]{Bank2025}. For this reason, we have opted for the proof of Proposition~\ref{prop: SDE signature approx} based on classical results from the theory of stochastic differential equations and rough paths.
\end{remark}

\begin{remark}
  Recently, so-called signature-based models have been introduced in mathematical finance in \cite{Cuchiero2023,Cuchiero2025}; see also \cite{Arribas2021}. These models offer several favorable features compared to classical approaches, which are typically based on stochastic differential equations, for describing financial markets. More precisely, signature models represent the underlying dynamics as linear functionals acting on the random signature of a driving noise process, with the time-extended Brownian motion being the most commonly used example. 
  Proposition~\ref{prop: SDE signature approx} provides a theoretical density result for this  class of Brownian signature models in the classical (unweighted) $L^p$-sense. In particular, it shows that finite linear combinations of time-extended Brownian signature coordinates can approximate solutions to a broad class of stochastic differential equations, independently of the specific drift and diffusion structure. This complements the existing numerical literature on signature-based models in finance, where such finite-dimensional signature models have already been successfully applied, for instance, to the calibration of Heston- and SABR-type stochastic volatility models in \cite{Cuchiero2023}.
\end{remark}

\appendix
\section{Auxiliary results on stopped rough paths}\label{sec: appendix}

In this appendix, we collect auxiliary results on stopped rough paths. We prove that the stopped rough path extension introduced in
Definition~\ref{def: stopped rough path} is well-defined, in the sense that the stopped lifts of smooth time-extended approximations converge and the limit is independent of the chosen approximation. We further show that the metric topology on the space of stopped rough paths coincides with the final topology induced by the canonical restriction map.

\begin{lemma}\label{lem: estimate appendix}
  Let $\alpha\in(0,1]$ and $N\in\N$. Further, let $G=G^N(\R^{d+1})$ and let $\tau_a:=\exp(ae_0)$. For every $T>0$ there exists a constant $C_T>0$ such that, for all $a,b\ge0$ with $a+b\le T$, all $\delta\in[0,1]$, and all $z\in G$ satisfying
  \begin{equation*}
    \|z\|_{cc}\le \delta b^\alpha ,
  \end{equation*}
  one has
  \begin{equation*}
    \|\tau_a^{-1}\otimes z\otimes\tau_a\|_{cc}\le C_T\delta^{1/N}(a+b)^\alpha .
  \end{equation*}
\end{lemma}

\begin{proof}
  Write $z=\exp(\xi)$, with $\xi\in\mathfrak g^N(\R^{d+1})$, i.e.
  \begin{equation*}
    \xi=(0,\xi^{(1)},\ldots,\xi^{(N)}),
    \qquad
    \xi^{(k)}\in(\R^{d+1})^{\otimes k}.
  \end{equation*}
  For $h$ in the Lie algebra, the adjoint map is defined by
  \begin{equation*}
    \operatorname{ad}{e_0}(h):=[e_0,h].
  \end{equation*}
  By the adjoint identity \cite[Lemma~7.22]{Friz2010}, applied with $-ae_0$, we get
  \begin{equation*}
    \tau_a^{-1}\otimes z\otimes \tau_a=\exp(-ae_0)\otimes \exp(\xi)\otimes \exp(ae_0)=\exp\big(e^{-a\operatorname{ad}{e_0}}\xi\big).
  \end{equation*}
  Equivalently,
  \begin{equation*}
    \log(\tau_a^{-1}\otimes z\otimes \tau_a)=e^{-a\operatorname{ad}{e_0}}\xi .
  \end{equation*}
  Using the Baker--Campbell--Hausdorff formula \cite[Theorem~7.24]{Friz2010}, this exponential of the adjoint is a finite sum of iterated brackets:
  \begin{equation*}
    e^{-a\operatorname{ad}{e_0}}\xi=\sum_{i=0}^{N-1}\frac{(-a)^\ell}{i!}(\operatorname{ad}{e_0})^{i}\xi .
  \end{equation*}
  Since $\operatorname{ad}{e_0}$ increases the homogeneous degree by one, the $k$-th component $\eta^{(k)}$ of $\eta:=\log(\tau_a^{-1}\otimes z\otimes \tau_a)$ satisfies
  \begin{equation*}
    |\eta^{(k)}|\le C_T \sum_{j=1}^k a^{k-j}|\xi^{(j)}| .
  \end{equation*}
  By the equivalence of homogeneous norms, see \cite[Theorem~7.44]{Friz2010}, the homogeneous norm $g\mapsto \max_{1\le j\le N}|(\log g)^{(j)}|^{1/j}$ is equivalent to the Carnot--Carathéodory norm. Hence, the assumption $\|z\|_{cc}\le \delta b^\alpha$ implies
  \begin{equation*}
    |\xi^{(j)}|\le C\|z\|_{cc}^j\le C\delta^j b^{\alpha j}, \qquad j=1,\dots,N.
  \end{equation*}
  Since $a+b\le T$, $\alpha\le1$, $k\le N$, and $\delta\in[0,1]$, we obtain
  \begin{equation*}
    |\eta^{(k)}|\le C_T\delta^{k/N}(a+b)^{\alpha k}.
  \end{equation*}
  Using again the equivalence of homogeneous norms yields
  \begin{equation*}
    \|\tau_a^{-1}\otimes z\otimes\tau_a\|_{cc}
    =\|\exp(\eta)\|_{cc}
    \le C_T\max_{1\le k\le N}|\eta^{(k)}|^{1/k}
    \le C_T\delta^{1/N}(a+b)^\alpha .
  \end{equation*}
\end{proof}

\begin{lemma}
  Let $\alpha\in(0,1]$, let $N:=\lfloor 1/\alpha\rfloor$, and let $0\le t\le T$. Let
  \begin{equation*}
    \hbX_{[0,t]}
    \in C^{0,\alpha}([0,t];G^N(\R^{d+1}))
  \end{equation*}
  be a time-extended geometric rough path. Suppose that $\hX^n_r=(r,X^n_r)$, $r\in[0,t]$, is a sequence of smooth time-extended paths whose canonical lifts $\hbX^n$ satisfy
  \begin{equation*}
    d_{cc,\alpha;[0,t]} (\hbX^n,\hbX)\to 0 .
  \end{equation*}
  For $r\in[0,T]$, define the stopped paths $\hX^{n,t}_r:=(r,X^n_{r\wedge t})$, and denote their canonical lifts on $[0,T]$ by $\hbX^{n,t}$. Then, the sequence $(\hbX^{n,t})_{n\in\N}$ converges in $d_{cc,\alpha;[0,T]}$. Moreover, the resulting limit is independent of the approximating sequence $(\hX^n)_{n\in\N}$. Consequently, the stopped rough path
  \begin{equation*}
    \hbX^{t}_{[0,T]}:=\lim_{n\to\infty}\hbX^{n,t}_{[0,T]}
  \end{equation*}
  is well-defined.
\end{lemma}

\begin{proof}
  Let $G:=G^N(\R^{d+1})$, where $N=\lfloor 1/\alpha\rfloor$. For $a\ge0$, write
  \begin{equation*}
    \tau_a:=\exp(ae_0)\in G .
  \end{equation*}
  For the canonical lift of the stopped path $r\mapsto (r,X^n_{r\wedge t})$, Chen's relation yields, for $0\le u\le v\le T$,
  \begin{equation*}
    \hbX^{n,t}_{u,v}
    =
    \begin{cases}
    \hbX^{n}_{u,v},
    & 0\le u\le v\le t,\\[3pt]
    \tau_{v-u},
    & t\le u\le v\le T,\\[3pt]
    \hbX^{n}_{u,t}\otimes \tau_{v-t},
    & 0\le u<t<v\le T .
    \end{cases}
  \end{equation*}


  Set
  \begin{equation*}
    \delta_{n,m}
    := d_{cc,\alpha;[0,t]}
    (\hbX^{n},\hbX^{m} ).
  \end{equation*}
  Since $\hbX^n\to\hbX$ in $d_{cc,\alpha;[0,t]}$, we have $\delta_{n,m}\to0$ as $n,m\to\infty$. In particular, for all sufficiently large $n,m$, we may assume that $\delta_{n,m}\le1$.

  Let $0\le u<v\le T$. If $0\le u\le v\le t$, then
  \begin{equation*}
    d_{cc}(\hbX^{n,t}_{u,v},\hbX^{m,t}_{u,v})= d_{cc}
    (\hbX^{n}_{u,v},\hbX^{m}_{u,v} )
    \le \delta_{n,m}|v-u|^\alpha .
  \end{equation*}
  If $t\le u\le v\le T$, both increments are equal to $\tau_{v-u}$, and the distance is zero.

  It remains to consider $u<t<v$. Put
  \begin{equation*}
    a:=v-t, \qquad  b:=t-u .
  \end{equation*}
  Then $a+b=v-u$, and by left-invariance of $d_{cc}$,
  \begin{equation*}
    \begin{aligned}
    &d_{cc}(\hbX^{n,t}_{u,v},\hbX^{m,t}_{u,v}) \\
    &\quad =\|\tau_a^{-1}\otimes ((\hbX^{n}_{u,t})^{-1}\otimes \hbX^{m}_{u,t})\otimes\tau_a \|_{cc} .
    \end{aligned}
  \end{equation*}
  With $z:=(\hbX^{n}_{u,t})^{-1}\otimes \hbX^{m}_{u,t},$ we have $\|z\|_{cc}\le \delta_{n,m} b^\alpha.$
  Hence, by Lemma \ref{lem: estimate appendix}, we have
  \begin{equation*}
    d_{cc}(\hbX^{n,t}_{u,v},\hbX^{m,t}_{u,v})
    \le C_T\delta_{n,m}^{1/N}(a+b)^\alpha
    = C_T\delta_{n,m}^{1/N}|v-u|^\alpha .
  \end{equation*}
  Combining the three cases and taking the supremum over $0\le u<v\le T$ gives, for all sufficiently large $n,m$,
  \begin{equation*}
    d_{cc,\alpha}(\hbX^{n,t},\hbX^{m,t})
    \le \max\Bigl\{\delta_{n,m},C_T\delta_{n,m}^{1/N}\Bigr\}.
  \end{equation*}
  Since $\delta_{n,m}\to0$, the right-hand side tends to zero. Hence, $(\hbX^{n,t})_n$ is a Cauchy sequence in $d_{cc,\alpha}$. By completeness of the geometric $\alpha$-H{\"o}lder rough path space, this sequence converges.

  It remains to show that the limit is independent of the chosen smooth approximation. Let $\widehat Y^m_r=(r,Y^m_r)$, $r\in[0,t]$, be another sequence of smooth time-extended paths whose canonical lifts $\widehat{\mathbf Y}^m$ converge to $\hbX$ in $d_{cc,\alpha;[0,t]}$. Let $\widehat{\mathbf Y}^{m,t}$ denote the canonical lifts of the stopped paths $r\mapsto (r,Y^m_{r\wedge t})$ on $[0,T]$.

  For sufficiently large $n,m$, the estimate above applied to $\hbX^n$ and $\hbY^m$ gives
  \begin{equation*}
    d_{cc,\alpha}
    (\hbX^{n,t},\hbY^{m,t})\le \max\Bigl\{d_{cc,\alpha;[0,t]}(\hbX^{n},\hbY^{m}),
    C_T\Bigl(d_{cc,\alpha;[0,t]}(\hbX^{n},\widehat{\mathbf Y}^{m})\Bigr)^{1/N}\Bigr\}.
  \end{equation*}
  Furthermore,
  \begin{equation*}
    \begin{aligned}
    d_{cc,\alpha;[0,t]}
    (\hbX^{n},\widehat{\mathbf Y}^{m} )
    &\le  d_{cc,\alpha;[0,t]}( \hbX^{n},
    \hbX) \\
    &\quad+ d_{cc,\alpha;[0,t]}( \hbX, \widehat{\mathbf Y}^{m}),
    \end{aligned}
  \end{equation*}
  and the right-hand side tends to zero as $n,m\to\infty$. Thus the two stopped approximating sequences have the same limit. Finally, since $\hbX^{n,t}_{[0,t]}=\hbX^n,$ passing to the limit gives $\hbX^{t}_{[0,t]}=\hbX$. Hence
  \begin{equation*}
    \hbX^{t}_{[0,T]} := \lim_{n\to\infty}\hbX^{n,t}_{[0,T]}
  \end{equation*}
  is well-defined, independent of the chosen approximating sequence, and extends $\hbX$ on $[0,t]$.
\end{proof}

\begin{lemma}\label{lem: phi cts}
  The function
  \begin{align*}
    \phi\colon[0,T]\times \widehat{C}_{d,T}^\alpha&\to \Lambda_T^\alpha,\\
    (t,\hbX)&\mapsto \hbX_{[0,t]},
  \end{align*}
  is continuous.
\end{lemma}

\begin{proof}
  Recall that $\widehat{C}_{d,T}^\alpha$ is endowed with the topology induced by $d_{cc,\alpha^\prime}$ for $\alpha^\prime<\alpha$. Let $(s,\hbY)\in[0,T]\times \widehat C_{d,T}^{\alpha}$ be fixed. We show continuity of $\phi$ at $(s,\hbY)$. For $(t,\hbX)\in[0,T]\times \widehat C_{d,T}^{\alpha}$, we have
  \begin{align*}
    d_{\Lambda,\alpha^\prime}(\phi(t,\hbX),\phi(s,\hbY))&=d_{\Lambda,\alpha^\prime}(\hbX_{[0,t]},\hbY_{[0,s]})\\
    &=|t-s|+d_{cc,\alpha^\prime}(\hbX^t_{[0,T]},\hbY^s_{[0,T]})\\
    &\le |t-s|+d_{cc,\alpha^\prime}(\hbX^t_{[0,T]},\hbY^t_{[0,T]})+d_{cc,\alpha^\prime}(\hbY^t_{[0,T]},\hbY^s_{[0,T]}).
  \end{align*}
  Now let $(t,\hbX)\to(s,\hbY)$, that is,
  \begin{equation*}
    |t-s|+d_{cc,\alpha^\prime}(\hbX,\hbY)\to0.
  \end{equation*}
  Then, we immediately obtain that
  \begin{equation*}
    |t-s|\to 0,\quad d_{cc,\alpha^\prime}(\hbX^t_{[0,T]},\hbY^t_{[0,T]})\to 0,\text{ and } d_{cc,\alpha^\prime}(\hbY^t_{[0,T]},\hbY^s_{[0,T]})\to 0.
  \end{equation*}
\end{proof}

\begin{lemma}\label{lem: topologies}
  The topology on $\Lambda_T^\alpha$ coincides with the topology induced by $\phi\colon [0,T]\times \widehat{C}_{d,T}^\alpha\to \Lambda_T^\alpha$ and $\Lambda_T^\alpha$ equipped with $d_{\Lambda,\alpha}$ is Polish.
\end{lemma}

\begin{proof}
  A set $U\subset \Lambda_T^\alpha$ is open with respect to the final topology if and only if $\phi^{-1}(U)$ is open in $[0,T]\times \widehat C_{d,T}^{\alpha}$, where $\widehat C_{d,T}^{\alpha}$ is equipped with the topology induced by $d_{cc,\alpha^\prime;[0,T]}$. By the previous lemma, $\phi$ is continuous for the topology induced by $d_{\Lambda,\alpha^\prime}$. Hence, if $U\subset \Lambda_T^\alpha$ is $d_{\Lambda,\alpha^\prime}$-open, then $\phi^{-1}(U)$ is open. Thus every $d_{\Lambda,\alpha^\prime}$-open set is open with respect to the final topology.

  Conversely, assume that $\phi^{-1}(U)$ is open for some set $U\subset \Lambda_T^\alpha$. Let $\hbX_{[0,t]}\in U$. We show that $\hbY_{[0,s]}\in U$ whenever $d_{\Lambda,\alpha'}(\hbX_{[0,t]},\hbY_{[0,s]})$ is sufficiently small. Since the stopped extension $\hbX^t_{[0,T]}$ belongs to $\widehat C_{d,T}^{\alpha}$ and satisfies
  \begin{equation*}
    \phi(t,\hbX^t)=\hbX_{[0,t]},
  \end{equation*}
  we have $(t,\hbX^t)\in\phi^{-1}(U)$. Since $\phi^{-1}(U)$ is open in $[0,T]\times \widehat C_{d,T}^{\alpha}$ and $(t,\hbX^t)\in\phi^{-1}(U)$, there exists $\epsilon>0$ such that, for every $(r,\widehat{\mathbf Z})\in[0,T]\times \widehat C_{d,T}^{\alpha}$,
  \begin{equation*}
    |t-r|+d_{cc,\alpha^\prime}(\hbX^t_{[0,T]},\widehat{\mathbf Z}_{[0,T]})<\epsilon
  \end{equation*}
  implies $(r,\widehat{\mathbf Z})\in\phi^{-1}(U).$ Now let $\hbY_{[0,s]}\in\Lambda_T^\alpha$ satisfy
  \begin{equation*}
    d_{\Lambda,\alpha'}(\hbX_{[0,t]},\hbY_{[0,s]})<\epsilon.
  \end{equation*}
  Choosing $r=s$ and $\widehat{\mathbf Z}=\hbY^s$. Then,
  \begin{equation*}
    |t-s|+d_{cc,\alpha^\prime}(\hbX^t_{[0,T]},\hbY^s_{[0,T]})=d_{\Lambda,\alpha^\prime}(\hbX_{[0,t]},\hbY_{[0,s]})<\epsilon.
  \end{equation*}
  Hence $(s,\hbY^s)\in\phi^{-1}(U)$. Applying $\phi$, we obtain
  \begin{equation*}
    \phi(s,\hbY^s)=\hbY_{[0,s]}\in U.
  \end{equation*}
  Moreover, $\Lambda_T^\alpha$, equipped with $d_{\Lambda,\alpha}$, is Polish. Indeed, separability follows from the separability of $[0,T]\times\widehat C_{d,T}^\alpha$, while completeness follows from the completeness of $[0,T]$ and $\widehat C_{d,T}^\alpha$, together with the definition of $d_{\Lambda,\alpha}$.
\end{proof}

\bibliography{quellen}{}
\bibliographystyle{amsalpha}

\end{document}